\newtheorem{theorem}{Theorem}
\newtheorem{prop}[theorem]{Proposition}
\newtheorem{cor}[theorem]{Corollary}
\newtheorem{lemma}[theorem]{Lemma}
\theoremstyle{definition}
\newtheorem{definition}{Definition}
\newtheorem{example}{Example}
\newtheorem{remark}{Remark}
\newcommand{\idiot}[1]{\vspace{5 mm}\par \noindent
\marginpar{\textsc{Note}}
\framebox{\begin{minipage}[c]{0.95 \textwidth}
#1 \end{minipage}}\vspace{5 mm}\par}
\renewcommand{\idiot}[1]{}
\def\la{{\lambda}}
\def\al{{\alpha}}
\def\be{{\beta}}
\def\ga{{\gamma}}
\def\QQ{{\mathbb Q}}
\def\CC{{\mathbb C}}
\def\st{{\tilde s}}
\def\xt{{\tilde x}}
\def\hht{{\tilde h}}
\def\mt{{\tilde m}}
\def\bP{{\mathcal P}}
\def\bT{{\mathcal T}}
\def\bC{{\mathcal C}}
\def\oP{{\overline\bP}}
\def\oT{{\overline\bT}}
\def\oC{{\overline\bC}}
\def\bfp{{\mathbf p}}
\def\bfpb{{\overline \bfp}}
\def\het{{\tilde{h\!e}}}
\def\mvdash{{\,\vdash\!\!\vdash}}
\def\HEX{{H\!\!\!E}}
\def\db{{\overline d}}
\def\dcl{{\{\!\!\{}}
\def\dcr{{\}\!\!\}}}
\def\o{\overline}
\def\SS{\mathbb{S}}
\newcommand{\pchoose}[2]{\begin{pmatrix}#1\\ #2\end{pmatrix}}
\newdimen\squaresize \squaresize=10pt
\newdimen\thickness \thickness=0.4pt
\def\square#1{\hbox{\vrule width \thickness
     \vbox to \squaresize{\hrule height \thickness\vss
        \hbox to \squaresize{\hss#1\hss}
     \vss\hrule height\thickness}
\unskip\vrule width \thickness}
\kern-\thickness}
\def\vsquare#1{\vbox{\square{$#1$}}\kern-\thickness}
\def\young#1{
\vbox{\smallskip\offinterlineskip
\halign{&\vsquare{##}\cr #1}}}
\def\thisbox#1{\kern-.09ex\fbox{#1}}
\def\downbox#1{\lower1.200em\hbox{#1}}
\newdimen\Squaresize \Squaresize=15pt
\newdimen\Thickness \Thickness=0.4pt
\def\Square#1{\hbox{\vrule width \Thickness
     \vbox to \Squaresize{\hrule height \Thickness\vss
        \hbox to \Squaresize{\hss#1\hss}
     \vss\hrule height\Thickness}
\unskip\vrule width \Thickness}
\kern-\Thickness}
\def\Vsquare#1{\vbox{\Square{$#1$}}\kern-\Thickness}
\title[The Hopf Structure of $S_n$ characters]{The Hopf structure of symmetric group characters
as symmetric functions}
\author{Rosa Orellana and Mike Zabrocki}
\date{}  
\thanks{Work supported by NSF grants DMS-1300512 and DMS-1700058, and by NSERC}
\keywords{symmetric functions, symmetric group characters, Hopf algebra}
\subjclass{05E05, 05E10}
\begin{document}

\begin{abstract}
In \cite{OZ} the authors introduced inhomogeneous bases of the ring of symmetric functions.
The elements in these bases have the property that they evaluate to characters of symmetric groups.
In this article we develop further properties of these bases by proving product and coproduct formulae.
In addition, we give the transition coefficients between
the elementary symmetric functions and the irreducible character basis.
\end{abstract}

\maketitle

\tableofcontents

\section{Introduction}
In \cite{OZ}, the authors introduced a basis of the symmetric functions $\{ \st_\lambda \}$
that specialize to the characters of the irreducible modules of the symmetric group
when the symmetric group $S_n$ is embedded in $GL_n$ as permutation matrices.
This basis provides a new perspective on the representation theory of the symmetric group
which is not well understood.   In addition, several outstanding open problems in combinatorial
representation theory are encoded in the linear algebra related to this basis.

One such problem is the {\it restriction problem} \cite{ButlerKing, King, Lit, Nis, ScharfThibon, STW}. Since
$S_n$ can be viewed as a subgroup of $GL_n$ via the embedding using permutation matrices, it follows that
 any irreducible polynomial module of $GL_n$, with character given by the  Schur function $s_\lambda$,  is a
representation of $S_n$ via this restriction.  Thus, the transition coefficients, $r_{\la,\mu}$,
between the Schur basis $\{s_\lambda\}$ and the irreducible character basis $\{\st_\mu\}$  encode
 the decomposition of a polynomial, irreducible $GL_n$-module into symmetric group irreducibles, i.e.,
 $$s_\lambda = \sum_{\mu} r_{\lambda,\mu} \st_\mu.$$
In addition,  we have shown in \cite{OZ} that the structure coefficient of the  $\{\st_\lambda\}$ basis are the
reduced (or stable) Kronecker coefficients, $\bar{g}_{\alpha,\beta}^\gamma$
\cite{Aizenbud, BO, BDE,  BDO, BOR, BOR1,  EA,  Murg, Murg2, Murg3}.
The history of these problems indicates that they are
difficult and are unlikely to be resolved in a single step.
In order to make further progress, we need to develop some of the properties of
character bases so that they can be treated as familiar objects in the
ring of symmetric functions.

The combinatorics related to the irreducible character basis involves
multiset partitions and multiset tableaux.  Developing
analogues of the notion of `lattice' that exists for words and column
strict tableaux would likely help find a combinatorial interpretations for
\begin{enumerate}
\item the transition coefficients, $r_{\lambda,\mu}$, from the Schur function $s_\lambda$
to the irreducible character basis; and
\item the structure coefficients of the irreducible character basis, $\bar{g}_{\alpha,\beta}^\gamma$.
\end{enumerate}
The Hopf algebra structure that we develop here lays out the combinatorics
and indicates that a combinatorial interpretation for the stable Kronecker and restriction
problems may exist using operations on multiset tableaux.

In order to develop symmetric function expressions for irreducible characters
and to develop their properties, the authors introduced in \cite{OZ} an intermediate
basis $\hht_\lambda$ which represent the characters of the trivial module
in $S_{\lambda_1} \times S_{\lambda_2} \times \cdots \times S_{\lambda_{\ell(\lambda)}}$ induced
to $S_n$.  A third character basis, $\xt_\lambda$, was introduced in \cite{AS}
and it represents the character of the module $\SS^\lambda \otimes \SS^{(n-|\lambda|)}$
of $S_{|\lambda|}\times S_{n-|\lambda|}$ induced to $S_n$, where $\SS^\lambda$ is the irreducible
module indexed by $\lambda$ and $\SS^{(n-|\lambda|)}$ is the trivial module.
 Assaf and Speyer used this basis to show that the  Schur expansion of the $\st_\lambda$ basis is sign alternating in
degree.  We show that the power sum expansion of all three of the character bases is given
by expanding them in terms of two analogues of the power sum basis,
$\bfp_\lambda$ and $\bfpb_\lambda$
(see Equations \eqref{eq:powerrects}--\eqref{eq:pexpansionofcharbases}).

In \cite{OZ2} we gave combinatorial interpretations for some products involving the $\st_\lambda$ basis.
The purpose of this paper is to further develop algebraic properties of the
character and related bases.  The main results in this paper
are product formulae for the $\hht_\lambda$, $\bfp_\lambda$ and $\bfpb_\lambda$ bases; as well as
coproduct formulae for the $\hht_\lambda$, $\bfp_\lambda$, $\bfpb_\lambda$,
$\xt_\lambda$ and $\st_\lambda$ bases.  The coproduct formulae correspond to restriction of characters
from $S_n$ to $S_r\times S_s$  with $r+s=n$.

In \cite{OZ} we provided a combinatorial interpretation for the transition coefficients from the
complete homogeneous basis to the $\st_\lambda$-basis in terms of multiset  tableaux.  In
representation theory, this is equivalent to computing the multiplicities when we restrict
the tensor product of symmetric tensors from $GL_n$ to $S_n$.
In this paper, we give a  combinatorial interpretation for the expansion of the
elementary basis in the irreducible character basis.  This corresponds to finding a combinatorial interpretation
of the multiplicities of  the restriction of the antisymmetric tensor products
from $GL_n$ to $S_n$.

\section{Notation and Preliminaries}
The combinatorial objects that arise in our work are the classical building blocks:
set, multiset, partition, set partition, multiset partition, composition, weak composition,
tableau, words, etc.  In this section we remind the reader about the definitions of these
objects as well as establish notation and usual conventions that we will use in this paper.

A {\it partition} of a non-negative integer $n$ is a weakly decreasing sequence
$\lambda= (\lambda_1, \lambda_2, \ldots, \lambda_\ell)$ such that
$|\lambda|=\lambda_1 + \lambda_2 + \cdots + \lambda_\ell= n$.  We call
$n=|\lambda|$ the {\it size} of $\lambda$.   The $\lambda_i$'s are called
the \emph{parts} of the partition and the number of nonzero parts is called the {\it length} and it is
often denoted $\ell(\lambda)=\ell$.   We use the notation $\lambda\vdash n$ to mean
that $\lambda$ is a partition of $n$.   We reserve $\lambda$ and $\mu$ exclusively for
partitions.  We denote by $m_i(\lambda)$ the \emph{multiplicity} of $i$ in $\lambda$, that is,
the number of times it occurs as a part in $\lambda$.  Another useful notation for a partition
is the exponential notation where $m_i = m_i(\la)$ and
$\la = (1^{m_1}2^{m_2}\cdots k^{m_k})$. With this notation
the number of permutations with cycle structure $\lambda\vdash n$ is
$\frac{n!}{z_\la}$ where
\begin{equation}\label{eq:zeela}
z_\la = \prod_{i=1}^{\la_1} m_i(\la)! i^{m_i(\la)}~.
\end{equation}

The most common operation we use is that of adding a
part of size $n$ to the beginning of a partition.  This
is denoted $(n, \la)$.  If $n < \la_1$, this
sequence will no longer be an integer partition and we will have
to interpret the object appropriately.  Similarly, if
$\lambda$ is non-empty, then let
$\o{\lambda} = (\lambda_2, \lambda_3, \ldots, \lambda_{\ell(\lambda)})$.

For a partition $\la$ the set $\{ (i,j) : 1 \leq i \leq \la_j, 1 \leq j \leq \ell(\la) \}$
are called the \emph{cells} of $\la$ and we represent these cells as stacks of boxes in
the first quadrant with the largest part at the bottom following the `French notation'.
This graphical representation of the cells is called the {\it Young diagram}
of $\lambda$.    A {\it tableau} is a mapping from the set of
cells to a set of labels.   A  tableau will be represented
by filling the boxes of the diagram of a partition with
the labels. In our case, we will encounter tableaux where only a subset of the cells are
mapped to a label.  The  {\it content} of a tableau is the
multiset obtained with the total number of occurrences of each number.

A {\it multiset} is a collection of elements such that repetitions are allowed,  multisets are denoted
by $\dcl b_1, b_2, \ldots, b_r \dcr$.  Multisets will also be represented by
exponential notation so that $\dcl 1^{a_1},2^{a_2},\ldots, \ell^{a_\ell}\dcr$
represents the multiset where the value $i$ occurs
$a_i$ times.

A {\it set partition} of a set $S$ is a set of subsets
$\{ S_1, S_2, \ldots, S_\ell\}$ with $S_i \subseteq S$ for
$1 \leq i \leq \ell$,
$S_i \cap S_j = \emptyset$ for $1 \leq i< j \leq \ell$
and $S_1 \cup S_2 \cup \cdots \cup S_\ell = S$.
A {\it multiset partition} $\pi = \dcl S_1, S_2, \ldots, S_\ell \dcr$
of a multiset $S$ is a similar construction to a
set partition, but now $S_i$ is a multiset, and it is possible that two multisets
$S_i$ and $S_j$ have non-empty intersection (and may
even be equal).  The \emph{length} of a multiset partition is the number of non-empty multisets
in the partition and it is denoted by
$\ell(\pi) = \ell$.  We will use the notation
$\pi \mvdash S$ to indicate that $\pi$ is a multiset
partition of the multiset $S$.

We will use $\mt(\pi)$ to represent the partition of $\ell(\pi)$
consisting of the multiplicities of the multisets which
occur in $\pi$ (e.g. $\mt(\dcl\dcl 1,1,2\dcr,\dcl 1,1,2\dcr,\dcl 1,3\dcr\dcr)
=(2,1)$ because $\dcl 1,1,2\dcr$ occurs 2 times and
$\dcl 1,3\dcr$ occurs 1 time).

For non-negative integers $n$ and $\ell$, a \emph{composition of size} $n$
is an ordered sequence  of positive integers $\alpha = (\alpha_1, \al_2, \ldots, \al_\ell)$
such that
$\al_1+\al_2 + \cdots +\al_\ell=n$.  A \emph{weak composition}
is such a sequence with the condition that $\al_i \geq 0$ (zeros are allowed).
To indicate that $\alpha$ is a composition of $n$ we will
use the notation
$\alpha \models n$ and to indicate that
$\alpha$ is a weak composition of $n$ we will use the notation
$\alpha \models_w n$.  For both
compositions and weak compositions, $\ell(\alpha) := \ell$.

\subsection{The ring of symmetric functions.} \label{sec:symfunc}
For some modern references on this subject see
for example \cite{Mac, Sagan, Stanley, Lascoux}.
The ring of symmetric functions will be denoted
$Sym = \QQ[p_1, p_2, p_3, \ldots]$.  The $p_k$ are
power sum generators and they will be thought of as
functions which can be evaluated at values when
appropriate by making the substitution
$p_k \rightarrow x_1^k + x_2^k + \ldots + x_n^k$
but they will be used algebraically in this ring
without reference to their variables. Thus the degree of each $p_k$ is $k$.
It is well-known that the monomials
in the power sums $p_\lambda := p_{\lambda_1} p_{\lambda_2}
\cdots p_{\lambda_{\ell(\lambda)}}$ where $\lambda \vdash n$
span the subspace of the symmetric functions of degree $n$.

For any symmetric function $f \in Sym$, $f$ is a linear combination of
the power sum basis $f = \sum_\lambda a_\lambda p_\lambda$, then for any expression
$E(x_1, x_2, x_3, \ldots)$ in the set of variables $x_1, x_2, x_3, \ldots$ (and potentially
$y_1, y_2, y_3, \ldots, z_1, z_2, z_3, \ldots$) the notation $f[E]$ will represent
$$f[E(x_1, x_2, x_3, \ldots)] =
\sum_{\lambda} a_{\lambda} \prod_{i=1}^{\ell(\lambda)}
E(x_1^{\lambda_i}, x_2^{\lambda_i}, x_3^{\lambda_i}, \ldots)~.$$
The advantage of this notation is that if $X_n = x_1+x_2+ \cdots +x_n$, then
$f[X_n]$ is a symmetric polynomial in $n$ variables
since $p_k[X_n] = x_1^k + x_2^k + \cdots + x_n^k$ is the power sum symmetric polynomial.
This notation is also practical because by defining
$X = x_1 + x_2 + \cdots$, then $f[X]$ is a symmetric series in the set of variables
$x_1, x_2, x_3, \ldots$ such that if the variables $x_i=0$ for $i>n$, then
the resulting expression is $f[X_n]$.  For a given alphabet $X$, set $Sym_X$ to be
the symmetric functions (or symmetric polynomials if $X$ is finite) in the variables $X$
and $Sym_X \simeq Sym$ as a graded Hopf algebra.

A \emph{Hopf algebra} \cite{Sweedler} is a vector space with both a product and a coproduct that form a bialgebra
structure along with some additional relations including the existence of map known as an antipode.
Hopf algebras arise in many areas of mathematics, but in algebraic combinatorics the symmetric functions
as a graded bialgebra is one of the prototypes for what is known as a `combinatorial Hopf algebra' \cite{GrinbergReiner}.

The symmetric functions have a Hopf structure on it,  \cite[Chapter 2]{GrinbergReiner}, and the coproduct operation
can be encoded in operations of evaluating the symmetric function on sets of
variables.  The $p_k$ are primitive elements of $Sym$ so that $\Delta(p_k) = p_k \otimes
1 + 1 \otimes p_k$.  Since $p_k[X + Y] = p_k[X] + p_k[Y]$, it follows that
if we represent the coproduct of $f$ in Sweedler notation,
$\Delta(f) = \sum_i f^{(i)} \otimes {\tilde f}^{(i)}$, then it is possible to derive,
$$f[X+Y] = \sum_i f^{(i)}[X] {\tilde f}^{(i)}[Y]$$
by linearity relations calculated on the power sum basis.
We will calculate formulas for the coproduct operation on the character bases in
Section \ref{sec:coprods}
and this symmetric function notation will be helpful in our proofs since $Sym \otimes Sym \simeq
Sym_{X,Y}$ as a graded Hopf algebra.

The standard bases of $Sym$ (each indexed by the set of partitions $\lambda$) are
{\it power sum} $\{p_\la\}_{\la}$,
{\it homogeneous/complete} $\{h_\la\}_{\la}$,
{\it elementary} $\{e_\la\}_{\la}$,
{\it monomial} $\{m_\la\}_{\la}$, and
{\it Schur} $\{s_\la\}_{\la}$.
The Hall inner product is defined by declaring that the
power sum basis is orthogonal, i.e.,
$\left< \frac{p_\la}{z_\la}, p_\mu \right>
= \delta_{\la\mu}$, where we use the notation
$\delta_{\lambda\mu} = 1$ if $\lambda=\mu$ and $0$ if $\lambda \neq \mu$.
Under this inner product the Schur functions are orthonormal
and the monomial and homogeneous functions are dual, i.e.,
$\left<s_\lambda,s_\mu \right> = \left<h_\lambda,m_\mu \right> =\delta_{\lambda\mu}$. We use this scalar product to represent values of coefficients by taking scalar products with dual bases.  In particular,
an identity that we will repeatedly use
\begin{equation}\label{eq:dualexpansion}
f = \sum_{\lambda} \left< f, a_\lambda\right> b_\lambda
\end{equation}
for any pair of bases $\{ a_\lambda\}_\lambda$ and $\{ b_\lambda\}_\lambda$
such that $\left< a_\lambda, b_\mu \right> = \delta_{\lambda\mu}$.

The scalar product on $Sym$ naturally extends to a scalar product on $Sym_X, Sym \otimes Sym$
and $Sym_{X,Y}$.  The identities that we will use to calculate coproducts
on the character symmetric functions in Section \ref{sec:coprods}
can be found in \cite[Section 5, p. 91-92]{Mac}.

We will also refer to the irreducible character of the symmetric group indexed by the partition $\la$ and evaluated at a permutation
of cycle structure $\mu$ as the coefficient
$\left< s_\la, p_\mu \right> = \chi^\la(\mu)$.
For $k > 0$, define
$$\Xi_k := 1, e^{2\pi i/k}, e^{4\pi i/k},
\ldots, e^{2(k-1)\pi i/k}$$
as a symbol representing the eigenvalues of a permutation
matrix of a $k$-cycle.  Then for any partition
$\mu$, let
$$\Xi_\mu := \Xi_{\mu_1}, \Xi_{\mu_2},
\ldots, \Xi_{\mu_{\ell(\mu)}}$$
be the multiset of eigenvalues of a permutation matrix with
cycle structure $\mu$.  We will evaluate symmetric
functions at these eigenvalues.  The notation
$f[\Xi_\mu]$ represents taking the element
$f \in Sym$ and replacing $p_k$ in $f$ with
$x_1^k + x_2^k + \cdots + x_{|\mu|}^k$ and then
replacing the variables $x_i$ with the values in $\Xi_\mu$.  In particular,
for $r,k>0$, $p_r[\Xi_k] = 0$ unless either $r=0$ (in which case the value is $1$)
or $k$ divides $r$ (in which case $p_r[\Xi_k] = k$).

From \cite{OZ}, a useful tool that we will repeatedly use
to establish symmetric function identities is the following proposition.
\begin{prop} (\cite[Proposition 38]{OZ} and \cite[Corollary 40]{OZ})
\label{prop:rootsimplsf} Let $f,g \in Sym$ be symmetric functions
of degree less than or equal to some positive integer $n$.
Assume that $$f[\Xi_\ga] = g[\Xi_\ga]$$ for all partitions
$\ga$ such that $|\ga| \leq n$ (respectively, $|\ga| \geq n$), then
$f=g$
as elements of $Sym$.
\end{prop}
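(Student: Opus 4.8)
The plan is to put $h=f-g$, a symmetric function of degree at most $n$ with $h[\Xi_\ga]=0$ for every $\ga$ satisfying $|\ga|\le n$, and to deduce $h=0$. The key simplification is to record each $\ga$ by its multiplicity vector $y=(y_1,\dots,y_n)$, $y_d=m_d(\ga)$. Because $p_r[X+Y]=p_r[X]+p_r[Y]$ and $p_r[\Xi_k]$ equals $k$ when $k\mid r$ and $0$ otherwise, summing over the parts of $\ga$ gives, for $r\ge 1$,
$$ p_r[\Xi_\ga]=\sum_{d\mid r} d\,m_d(\ga)=\sum_{d\mid r} d\,y_d=:\tilde p_r(y). $$
Hence $h[\Xi_\ga]=F(y)$, where $F$ is the image of $h$ under the algebra homomorphism $\psi\colon Sym\to\QQ[y_1,y_2,\dots]$ sending $p_r\mapsto\tilde p_r$; and as $\ga$ ranges over all partitions of size at most $n$, the vector $y$ ranges over exactly the lattice points of $L:=\{\,y\in\ZZ_{\ge 0}^n:\sum_d d\,y_d\le n\,\}$.

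First I would reduce the claim to showing $F\equiv 0$ as a polynomial. The linear map $(y_1,\dots,y_n)\mapsto(\tilde p_1,\dots,\tilde p_n)$ has matrix whose $(r,d)$ entry is $d$ if $d\mid r$ and $0$ otherwise; this is lower triangular for the order $1<2<\dots<n$ with nonzero diagonal entries $r$, hence invertible, so $\tilde p_1,\dots,\tilde p_n$ are algebraically independent and $\psi$ is injective on $\QQ[p_1,\dots,p_n]$. Since $h$ has degree at most $n$ it lies in $\QQ[p_1,\dots,p_n]$, so $F\equiv 0$ will force $h=0$. Second, every monomial $\prod_d y_d^{a_d}$ occurring in $\prod_j\tilde p_{\mu_j}$ has weight $\sum_d d\,a_d\le|\mu|\le n$, so the support of $F$ is contained in $L$, while the hypothesis says precisely that $F$ vanishes at every lattice point of $L$. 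As $L$ is downward closed for the coordinatewise order, the matter reduces to a unisolvence statement: a polynomial supported on a finite lower set $L$ that vanishes at all lattice points of $L$ must be zero.

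This unisolvence lemma is the main obstacle, and it is essential to have passed to $F$ first: the raw evaluation matrix $(p_\mu[\Xi_\ga])_{\ga,\mu}$ is not triangularisable by any reordering, since for instance $p_{222}[\Xi_{(4,2)}]$ and $p_{42}[\Xi_{(2,2,2)}]$ are both nonzero. To prove the lemma — for a finite lower set in any number of variables — I would induct on the number of variables. Splitting off the last variable $z$ and collecting the rest as $w$, write $F=\sum_{t\ge 0} z^{\,t}F_t(w)$, where $F_t$ is supported on the lower set $L_t:=\{a':(a',t)\in L\}$; these are nested, $L_0\supseteq L_1\supseteq\cdots\supseteq L_T$. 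For a point $w=p'\in L_T$ every pair $(p',t)$ with $0\le t\le T$ lies in $L$, so the equations $F(p',z)=\sum_{t=0}^{T} z^{\,t}F_t(p')=0$ at $z=0,1,\dots,T$ form a Vandermonde system forcing $F_t(p')=0$ for all $t$; in particular $F_T$ vanishes on $L_T$ and, being supported there, vanishes by the inductive hypothesis. Removing the top term lowers the $z$-degree, and repeating downward on $t$ — at stage $t_0$ each $p'\in L_{t_0}$ still admits the nodes $z=0,\dots,t_0$ — yields $F_{t_0}\equiv 0$ for every $t_0$, hence $F\equiv 0$ and $h=0$. The base case is one-variable Lagrange interpolation, and the whole argument uses only the value of $p_r[\Xi_k]$ together with additivity of the power sums.
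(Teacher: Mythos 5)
Your proof is correct. Note that this paper does not actually prove the proposition: it is imported verbatim as Proposition 38 of \cite{OZ}, so there is no internal argument to compare against, and your write-up stands as a self-contained, elementary proof. The two nontrivial ingredients are both handled properly. First, the substitution $y_d=m_d(\gamma)$ turns $p_r[\Xi_\gamma]=\sum_{d\mid r}d\,y_d$ into a triangular, hence invertible, linear change of coordinates, so the forms $\tilde p_1,\dots,\tilde p_n$ are algebraically independent and vanishing of $F=\psi(h)$ really does force $h=0$; your side remark that the raw evaluation matrix $\left(p_\mu[\Xi_\gamma]\right)$ admits no triangularizing order (e.g.\ $p_{222}[\Xi_{(4,2)}]=8$ and $p_{42}[\Xi_{(2,2,2)}]=36$ are both nonzero) correctly explains why this change of coordinates is needed and not merely convenient. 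Second, the weight bound $\sum_d d\,a_d\le|\mu|\le n$ places the support of $F$ inside the lower set $L$, and your unisolvence lemma for lower sets is proved correctly: the descending induction on the $z$-degree works because, for $p'\in L_{t_0}$, the lower-set property supplies all $t_0+1$ nodes $(p',0),\dots,(p',t_0)$ for the Vandermonde step, and the sections $L_t$ are again lower sets so the induction on the number of variables closes. One point worth stating explicitly in a final version: the hypothesis must include the empty partition (the point $y=0$ of $L$), since otherwise the constant term of $h$ is undetermined; your lemma uses all of $L$, which is consistent with the statement as written.
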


\subsection{Symmetric group characters as symmetric functions}
In this section,  we will define five additional bases for the ring of symmetric functions and
discuss their connections to the characters of the symmetric group.
Four of these appeared in \cite{OZ} and the fifth basis was introduced in \cite{AS}.

The following expressions were developed
from character polynomials and yield two analogues of the power sum basis.
For each $i, r>0$, define
\begin{equation}\label{eq:powerrects}
\bfpb_{i^r} = i^r
\left(\frac{1}{i} \sum_{d|i} \mu(i/d) p_d \right)_{\!\!\!r}\hbox{ and }
\bfp_{i^r} = \sum_{k=0}^r (-1)^{r-k} \binom{r}{k}
\bfpb_{i^k}
\end{equation}
where $(x)_k=x(x-1)\cdots(x-k+1)$ denotes the $k$-th falling factorial.

Then if a partition $\gamma$ is expressed as $\gamma = 1^{m_1(\gamma)}2^{m_2(\gamma)}\cdots \ell^{m_\ell(\gamma)}$, we set
$$\bfpb_{\gamma}
:= \prod_{i \geq 1} \bfpb_{i^{m_i(\gamma)}}\hbox{ and }
\bfp_{\gamma}
:= \prod_{i \geq 1} {\mathbf p}_{i^{m_i(\gamma)}}~.
$$

Note that by M\"obius inversion and
Equation \eqref{eq:powerrects} we also have that
\begin{equation}\nonumber
\bfpb_{i^r} = \sum_{k=0}^r \binom{r}{k}
\bfp_{i^k}~.
\end{equation}

Now define three more bases,
\begin{equation}\label{eq:pexpansionofcharbases}
\st_\lambda = \sum_{\gamma \vdash |\lambda|}
\chi^{\lambda}(\gamma) \frac{\bfp_{\ga}}{z_\gamma},
\qquad
\xt_\lambda = \sum_{\gamma \vdash |\lambda|}
\chi^{\lambda}(\gamma) \frac{\bfpb_{\ga}}{z_\gamma}
\quad \hbox{ and } \quad
\hht_\lambda = \sum_{\gamma \vdash |\lambda|} \left< h_\lambda, p_\gamma \right>
\frac{\bfpb_{\ga}}{z_\gamma}~.
\end{equation}

We refer to the set of functions $\st_\lambda$ as the {\it irreducible character basis} since
$\st_\lambda[X_n]$ are symmetric polynomials which evaluate to the irreducible characters
of the symmetric group in the same way that Schur functions evaluate to the
irreducible characters of polynomial $GL_n$ modules.
Assaf and Speyer \cite{AS} used the notation $s^\dagger_\lambda$ for $\st_\la$
and called them `stable Specht polynomials.'
The foundations of these symmetric functions is likely found
in earlier work than that of Specht \cite{Specht} on the representation theory of the symmetric groups
and go back to Frobenius \cite{Frobenius} and Young \cite{Young}.

The symmetric functions $\xt_\nu$ appear implicitly in \cite{AS} as $[ M_\nu ]$.
We refer to the set of functions $\xt_\lambda$ as the {\it induced irreducible character
basis} since, for $n\geq |\lambda|+\lambda_1$,  it encodes the character of the module
$\SS^{\lambda} \otimes \SS^{(n-|\lambda|)}\uparrow_{ S_{|\lambda|}\times S_{n-|\lambda|}}^{S_n}$,
where $\SS^{\lambda}$ is the irreducible module indexed by ${\lambda}$
and $\SS^{(n-|\lambda|)}$ is the trivial module of $S_{n-|\lambda|}$.  As a symmetric function, $\xt_\lambda$
is a sum of irreducible character basis elements with nonnegative coefficients.
We refer to the set of functions $\hht_\lambda$ as the {\it induced trivial character basis}
since the $\hht_\lambda[\Xi_\mu]$ are values of trivial characters induced from Young
subgroups $S_{\lambda_1} \times \cdots \times S_{\lambda_r}$ to the symmetric group $S_n$.

Theorem 14 and Equations (34) and (35) from \cite{OZ} say that if $\lambda$ is a partition
and $n$ is an integer such that $n \geq |\lambda|+\lambda_1$
and $\mu$ is a partition of $n$,
then $\st_\lambda[\Xi_\mu] = \chi^{(n-|\lambda|,\lambda)}(\mu)$.
Proposition 16 and Lemma 15 of \cite{OZ}
says that $\hht_\lambda[\Xi_\mu] = \left< h_{(n-|\lambda|, \lambda)}, p_\mu \right>$
if $n \geq |\lambda|$ and $0$ otherwise.  It is well-known that $\left< h_\mu, p_\gamma \right>
= \sum_{\la \vdash |\mu|} K_{\la\mu} \left< s_\lambda, p_\gamma \right>$, where $K_{\la\mu}$ are
the Kostka coefficients.  Then, it follows that $\hht_\mu = \sum_{\lambda\vdash |\mu|} K_{\la\mu} \xt_\lambda$
and $\xt_\lambda[\Xi_\mu] = \left< s_{(n-|\lambda|)}s_\lambda, p_\mu \right>$
if $n \geq |\lambda|$ and $0$ otherwise.

As Assaf and Speyer \cite[Proposition 5]{AS} point out,
the $\xt_\lambda$ and $\st_\lambda$ bases are related by
\begin{equation}\label{eq:relations}
\xt_\lambda = \sum_{\nu} \st_\nu \quad \hbox{ and } \quad \st_\lambda = \sum_{\nu} (-1)^{|\lambda/\nu|} \xt_\nu
\end{equation}
where the sum on the left
(resp. sum on the right) is over partitions $\nu$ contained in $\lambda$ such that $\lambda/\nu$
is a horizontal strip (resp. vertical strip). That is,  if the Young diagram for $\nu$ is drawn inside the Young diagram
for $\lambda$, then there is at most one cell per column (resp. row) in $\lambda$ which is not in $\nu$.

The $\st_\lambda$ form a basis of the symmetric functions
with the property that $\st_\lambda[X_n]$ is a symmetric polynomial (of inhomogeneous degree)
that evaluates to the characters of the irreducible symmetric group
modules $\SS^{(n-|\lambda|,\lambda)}$.  Then, the character symmetric functions are  a means of encoding the characters of
families of symmetric group modules which are representation stable
(a notion formalized in \cite{ChurchFarb}).  They are  equivalent to finitely generated $FI$-modules,
a category introduced by Church, Ellenberg and Farb
in \cite{ChurchEllenbergFarb} to capture the fundamental properties
of representation theory stability.

Let $FI$ be the category with objects $[n]:=\{1,2,\ldots,n\}$
and whose morphisms are injections $[n] \hookrightarrow [m]$.
An {\it FI-module} is a functor $V$ from $FI$ to the category of $\CC$-modules.
By evaluation at a set $[n]$, the functor $V$ realizes a family of $S_n$ modules $V([n])$.
The character basis is a way of encoding the
character of this family as a symmetric function.

In particular, Theorem 3.3.4 of \cite{ChurchEllenbergFarb} shows that
that if $V$ is a finitely generated $FI$-module, then there a polynomial $P_V(X_1, X_2, \ldots)$
such that the character of a permutation $\sigma$ (of a sufficiently large $n$) acting on
the module $V([n])$ is an evaluation of $P_V$.
This implies by Proposition 12 of \cite{OZ} that there is a corresponding symmetric function $f_V$
such that if the permutation $\sigma$ has cycle structure $\mu$, then the character
of $\sigma$ acting on $V([n])$ is equal to $f_V[\Xi_\mu]$.

Summarizing Theorem 3.3.4 appearing in \cite{ChurchEllenbergFarb} and how they
apply to the irreducible character basis, we have the following proposition.
\begin{prop} For every finitely generated $FI$-module $V$,
there exists a positive integer $M$ and coefficients $a_\lambda$
if and only if for all $n \geq M$,
$$V([n]) \simeq \bigoplus_{\lambda} (\SS^{(n-|\lambda|,\lambda)})^{\oplus a_\lambda}$$
and
$$f_V = \sum_{\lambda} a_\lambda \st_{\la}$$
is a symmetric function such that $f_V[\Xi_\mu]$ is the character
of $V$ evaluated at a permutation $\sigma$
of cycle structure $\mu \vdash n$ acting on $V([n])$.
\end{prop}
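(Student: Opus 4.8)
The plan is to combine the character-polynomial description of finitely generated $FI$-modules from \cite{ChurchEllenbergFarb} with the evaluation formula $\st_\lambda[\Xi_\mu] = \chi^{(n-|\lambda|,\lambda)}(\mu)$ recorded above, and then to read off the stable multiplicities using the orthonormality of irreducible characters. First I would invoke Theorem 3.3.4 of \cite{ChurchEllenbergFarb} together with Proposition 12 of \cite{OZ} to produce a single symmetric function $f_V \in Sym$ with the property that, for all sufficiently large $n$ and every permutation $\sigma \in S_n$ of cycle type $\mu$, the character of $\sigma$ acting on $V([n])$ equals $f_V[\Xi_\mu]$. Since $\{\st_\lambda\}_\lambda$ is a basis of $Sym$ and $f_V$ has bounded degree, I would expand $f_V = \sum_\lambda a_\lambda \st_\lambda$ as a finite linear combination with (a priori rational) coefficients $a_\lambda$.

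Next I would fix a threshold $M$ large enough that two conditions hold at once: the character identity $f_V[\Xi_\mu]$ computes the character of $V([n])$ for all $n \geq M$, and $n \geq |\lambda| + \lambda_1$ for every $\lambda$ in the finite support of the expansion of $f_V$. For such $n$ the evaluation formula applies termwise, so that for every $\mu \vdash n$,
$$f_V[\Xi_\mu] = \sum_\lambda a_\lambda \, \st_\lambda[\Xi_\mu] = \sum_\lambda a_\lambda \, \chi^{(n-|\lambda|,\lambda)}(\mu).$$
Because the shapes $(n-|\lambda|,\lambda)$ are valid partitions and are pairwise distinct for distinct $\lambda$ once $n \geq M$, the characters $\chi^{(n-|\lambda|,\lambda)}$ are orthonormal and hence linearly independent; comparing coefficients identifies $a_\lambda$ with the multiplicity of $\SS^{(n-|\lambda|,\lambda)}$ in $V([n])$. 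Since $V([n])$ is a genuine $S_n$-module each such multiplicity is a non-negative integer, and since the $a_\lambda$ are read off from the single symmetric function $f_V$ they do not depend on $n$; together these give the asserted isomorphism $V([n]) \simeq \bigoplus_\lambda (\SS^{(n-|\lambda|,\lambda)})^{\oplus a_\lambda}$ for all $n \geq M$. The reverse implication is immediate: given such a decomposition for all $n \geq M$, the same chain of equalities read in reverse shows that $f_V = \sum_\lambda a_\lambda \st_\lambda$ has the stated evaluation property, its well-definedness being guaranteed by Corollary \ref{cor:evalsf}.

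The main obstacle I anticipate is the bookkeeping around a uniform threshold $M$ rather than any conceptual difficulty: one must choose a single $M$ that simultaneously makes the \cite{ChurchEllenbergFarb} character-polynomial formula exact, keeps each shape $(n-|\lambda|,\lambda)$ a valid partition with first part at least $\lambda_1$, and keeps these shapes distinct. Equal care is needed to argue that the coefficients $a_\lambda$ extracted from the basis expansion in $Sym$ — which are a priori only rational — are genuinely the representation-theoretic multiplicities; this is precisely the point at which the orthonormality of the $\chi^{(n-|\lambda|,\lambda)}$ and the fact that $V([n])$ is an honest module become essential.
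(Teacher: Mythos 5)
Your proposal is correct and follows essentially the same route as the paper: the paper states this proposition without a separate proof, presenting it as a summary of Theorem 3.3.4 of \cite{ChurchEllenbergFarb} combined with Proposition 12 of \cite{OZ} and the evaluation $\st_\lambda[\Xi_\mu] = \chi^{(n-|\lambda|,\lambda)}(\mu)$ for $n \geq |\lambda|+\lambda_1$. Your sketch simply makes explicit the steps the surrounding text outlines (basis expansion of $f_V$, a uniform threshold $M$, and linear independence of the characters $\chi^{(n-|\lambda|,\lambda)}$ to identify the $a_\lambda$ as multiplicities), so there is no substantive divergence.
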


\subsection{A scalar product on characters and the Frobenius map}
\label{sec:scalarproduct}
In \cite{OZ2} we introduced a scalar product on symmetric functions
(coming from the usual scalar product on irreducible characters of the symmetric group)
for which the irreducible character basis was orthogonal.  This scalar product
was useful in the application presented in that paper because it allowed us to
calculate an individual coefficient of an irreducible character in
an expression.  In this section we develop some of the properties of this scalar
product further.

For all $n$ sufficiently large,
$$\sum_{\nu \vdash n} \frac{\st_\la[\Xi_\nu] \st_\mu[\Xi_\nu]}{z_\nu}
= \sum_{\nu \vdash n} \frac{\chi^{(n-|\lambda|,\lambda)}(\nu)
\chi^{(n-|\mu|,\mu)}(\nu)}{z_\nu} = \delta_{\lambda\mu}~.$$
Now the right hand side of this expression is independent of $n$. Thus,
for a sufficiently large $n$ and for any symmetric functions $f$ and $g$,
the expression $\sum_{\nu \vdash n} \frac{f[\Xi_\nu] g[\Xi_\nu]}{z_\nu}$
is also independent of $n$ by linearity since  $\{\st_\lambda\}$ is a basis.

Therefore we may define
\begin{equation}\label{eq:scalarproduct}
\left< f, g \right>_@ = \sum_{\nu \vdash n} \frac{f[\Xi_\nu] g[\Xi_\nu]}{z_\nu}
= \frac{1}{n!}\sum_{\sigma \in S_n} f[\Xi_{cyc(\sigma)}] g[\Xi_{cyc(\sigma)}]~.
\end{equation}
for
$n \geq 2 \max(deg(f), deg(g))$ and
where $cyc(\sigma)$ is a partition representing the cycle structure of $\sigma \in S_n$.
We use the $@$-symbol as a subscript of the
right angle bracket to differentiate this scalar product from the
usual scalar product where $\left< s_\lambda, s_\mu \right> = \delta_{\la\mu}$.

We can relate these scalar products by using the Frobenius
map which is a linear isomorphism from the class functions of
the symmetric group to the ring of symmetric functions.
Since we know that characters of the symmetric group (and
hence class functions) can be expressed as symmetric
functions, we can
define the Frobenius map or characteristic map on symmetric
functions
\begin{equation}\label{eq:frob}
\phi_n(f) = \sum_{\nu \vdash n} f[\Xi_\nu] \frac{p_\nu}{z_\nu}~.
\end{equation}
We have that $\phi_n$ is a map from the ring of symmetric functions to the
subspace of symmetric functions of degree $n$. And $\phi_n$ has the property that for symmetric
functions $f$ and $g$,
$$\phi_n(f g) = \phi_n(f) \ast \phi_n(g),$$
where $\ast$ denotes the Kronecker (or internal) product of symmetric functions.
Since
\begin{equation}
\hht_\lambda[\Xi_\nu] = \left< h_{|\nu| - |\lambda|} h_\lambda, p_\nu \right>,\hskip .1in
\xt_\lambda[\Xi_\mu] = \left< s_{(n-|\lambda|)}s_\lambda, p_\mu \right>
\hbox{ and  }\st_\lambda[\Xi_\nu] = \chi^{(|\nu|-|\lambda|,\lambda)}(\nu)
\end{equation}
if $|\nu| \geq |\lambda| + \lambda_1$,
then the image of $\st_\lambda$, $\xt_\lambda$ and $\hht_\lambda$ are
\begin{equation}\label{eq:images}
\phi_n( \hht_\lambda ) = h_{(n - |\la|, \la)},\hskip .1in
\phi_n( \xt_\lambda ) = s_{(n-|\lambda|)} s_\lambda
\hskip .1in\hbox{ and }\hskip .1in
\phi_n( \st_\lambda ) = s_{(n - |\la|, \la)}~.
\end{equation}

\begin{remark}
A result of Solomon \cite[Proposition 3.11]{Sol}, translated into the language of symmetric functions,
shows that the irreducible character of the rook
monoid algebra ${\mathbb C} R_k(n)$ indexed by the partition $\lambda$ (with $|\lambda| \leq k$)
at a partial permutation of cycle type $\mu$ (where $\mu$ is a partition with $|\mu| \leq k$)
is equal to the expression $\chi^\lambda_{R_k(n)}(\mu) =
\left< s_{(|\mu|-|\lambda|)}s_\lambda, p_\mu \right>$.  Since we
have that
\begin{equation*}
\xt_\lambda[\Xi_\mu]
= \left< s_{(|\mu|-|\lambda|)}s_\lambda, p_\mu \right>
= \chi^\lambda_{R_k(n)}(\mu)
\end{equation*}
so then the symmetric functions $\xt_\lambda$ are the {\it irreducible character basis of
the rook monoid} in the same way that the symmetric functions $\st_\lambda$
are the irreducible character basis of the symmetric group.
%
\end{remark}

This leads to the following proposition.
\begin{prop} If $n\geq 2 \max(deg(f),deg(g))$,
then
\begin{equation}\label{eq:twoscalarrelations}
\left< f, g \right>_@ = \left< \phi_n(f), \phi_n(g) \right>
\end{equation}
\end{prop}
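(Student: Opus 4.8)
The plan is to unwind both sides into the power sum basis and use nothing more than the orthogonality of that basis under the Hall inner product. First I would write out $\phi_n(f)$ and $\phi_n(g)$ using the defining formula \eqref{eq:frob}, namely $\phi_n(f) = \sum_{\nu \vdash n} f[\Xi_\nu] \frac{p_\nu}{z_\nu}$ and, with a separate summation index, $\phi_n(g) = \sum_{\mu \vdash n} g[\Xi_\mu] \frac{p_\mu}{z_\mu}$. Substituting these into the right-hand side $\left< \phi_n(f), \phi_n(g)\right>$ and using bilinearity of the Hall inner product produces the double sum
$$\left< \phi_n(f), \phi_n(g)\right> = \sum_{\nu,\mu \vdash n} \frac{f[\Xi_\nu]\, g[\Xi_\mu]}{z_\nu z_\mu} \left< p_\nu, p_\mu \right>~.$$

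Next I would invoke the orthogonality of the power sum basis. The defining relation $\left< p_\la/z_\la, p_\mu\right> = \delta_{\la\mu}$ is equivalent to $\left< p_\nu, p_\mu\right> = z_\nu\, \delta_{\nu\mu}$, so only the diagonal terms $\nu = \mu$ survive. After cancelling one factor of $z_\nu$, the double sum collapses to $\sum_{\nu \vdash n} \frac{f[\Xi_\nu]\, g[\Xi_\nu]}{z_\nu}$, which is exactly the definition \eqref{eq:scalarproduct} of $\left< f, g\right>_@$. This already establishes the asserted identity, so the argument is essentially a single computation rather than a multi-step proof.

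The only point requiring care is the hypothesis $n \geq 2\max(\deg(f),\deg(g))$, and I expect no genuine obstacle here. This bound plays no role in the algebraic manipulation above: the evaluation of $\left< \phi_n(f), \phi_n(g)\right>$ goes through for every $n$, since $\phi_n$ is defined for all $n$ and the orthogonality relation is unconditional. Rather, the bound is precisely the condition under which the sum $\sum_{\nu \vdash n} \frac{f[\Xi_\nu]\, g[\Xi_\nu]}{z_\nu}$ stabilizes to the $n$-independent value that \emph{defines} $\left< f, g\right>_@$ in \eqref{eq:scalarproduct}; it is therefore inherited from the well-definedness of the left-hand side rather than being needed to prove the equality. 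Conceptually, the statement simply records that $\phi_n$ transports the $@$-scalar product to the Hall scalar product, because it reads off the evaluations $f[\Xi_\nu]$ as the power-sum coordinates $z_\nu^{-1}$-scaled of its image.
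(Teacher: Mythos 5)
Your proof is correct, but it takes a genuinely different route from the paper's. The paper verifies the identity on the basis $\{\st_\lambda\}$: since $\phi_n(\st_\lambda)=s_{(n-|\lambda|,\lambda)}$ when $n\geq 2|\lambda|$, both sides evaluate to $\delta_{\lambda\mu}$ on pairs of basis elements, and the general statement follows by bilinearity. You instead expand $\phi_n(f)$ and $\phi_n(g)$ via the defining formula \eqref{eq:frob} and use the power-sum orthogonality $\left< p_\nu, p_\mu \right> = z_\nu\delta_{\nu\mu}$ to collapse the double sum directly to $\sum_{\nu\vdash n} f[\Xi_\nu] g[\Xi_\nu]/z_\nu$, which is the defining expression for $\left< f,g\right>_@$. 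Your computation is more self-contained --- it needs nothing beyond the definition of $\phi_n$ and the Hall inner product, rather than the previously established facts that $\phi_n(\st_\lambda)=s_{(n-|\lambda|,\lambda)}$ and $\left< \st_\lambda,\st_\mu\right>_@=\delta_{\lambda\mu}$ --- and it yields the slightly stronger observation that $\left<\phi_n(f),\phi_n(g)\right>$ equals the truncated sum $\sum_{\nu\vdash n} f[\Xi_\nu] g[\Xi_\nu]/z_\nu$ for \emph{every} $n$, with the hypothesis $n\geq 2\max(\deg f,\deg g)$ entering only to guarantee that this truncation is the stable value that defines $\left< f,g\right>_@$; you correctly isolate that point. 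What the paper's argument buys in exchange is the representation-theoretic picture: $\phi_n$ carries the orthonormal character basis to orthonormal Schur functions, which is the conceptual reason the two scalar products agree.
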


\begin{proof} For partitions $\lambda$ and $\mu$,
take an $n$ which is sufficiently large
(take $n \geq 2 \max(|\la|,|\mu|)$), then $(n-|\la|,\la)$
and $(n-|\mu|,\mu)$ are both partitions and this
scalar product can easily be computed on the irreducible character
basis by
\begin{equation}\nonumber
\left<\phi_n(\st_\la), \phi_n(\st_\mu)\right> =
\left<s_{(n-|\la|,\la)}, s_{(n-|\mu|,\mu)}\right> =
\delta_{\la\mu} = \left< \st_\la, \st_\mu \right>_@
~.
\end{equation}
Since this calculation holds on a basis,
Equation \eqref{eq:twoscalarrelations} holds for
all symmetric functions $f$ and $g$ by linearity.
\end{proof}

The $\st_\lambda$ symmetric functions are the orthonormal basis with respect to the scalar product $\left< \cdot, \cdot\right>_@$.
This basis is triangular with respect the Schur basis
($\st_\lambda$ is equal to $s_\lambda$ plus terms of lower degree) and hence it may be calculated using
Gram-Schmidt orthonormalization with respect to the $@$-scalar product.

\begin{remark} Using the fact that $\st_\la$ is an orthonormal basis and 
$s_\lambda = \sum_\gamma r_{\lambda,\gamma} \st_\gamma$ where $r_{\lambda,\gamma}$
are the multiplicities in the decomposition of a polynomial, irreducible $GL_n$-module into symmetric group irreducibles.
We have that
$\left< s_\la, s_\mu \right>_@ = \sum_\gamma  r_{\lambda, \gamma} r_{\mu, \gamma}$
which is a positive integer.
\end{remark}

The analogue $\bfp_\lambda$ of power sums are orthogonal
with respect to the $@$-scalar product in the same way that the power sums
are orthogonal with respect to the Hall scalar product.
By the definition, Equation \eqref{eq:pexpansionofcharbases},
\begin{equation}\label{eq:ptost}
\bfp_\lambda = \sum_{\mu \vdash |\la|}
\left< p_\lambda, s_\mu \right> \st_\mu~.
\end{equation}

\begin{prop}  For all partitions $\lambda$ and $\mu$,
$$\left< \bfp_\lambda, \bfp_\mu \right>_@ = z_\lambda \delta_{\lambda\mu}~.$$
\end{prop}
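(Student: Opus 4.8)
The plan is to expand both $\bfp_\lambda$ and $\bfp_\mu$ in the irreducible character basis $\{\st_\nu\}$ and to exploit the fact, recorded immediately above the statement, that this basis is orthonormal for $\langle\cdot,\cdot\rangle_@$. Equation \eqref{eq:ptost} already supplies the needed expansion, $\bfp_\lambda = \sum_{\alpha \vdash |\lambda|} \left< p_\lambda, s_\alpha\right> \st_\alpha$, and since only $\st_\alpha$ with $\alpha \vdash |\lambda|$ appear, bilinearity of the scalar product immediately gives
$$\left< \bfp_\lambda, \bfp_\mu\right>_@ = \sum_{\alpha \vdash |\lambda|} \sum_{\beta \vdash |\mu|} \left< p_\lambda, s_\alpha\right> \left< p_\mu, s_\beta\right> \left< \st_\alpha, \st_\beta\right>_@ = \sum_{\alpha} \left< p_\lambda, s_\alpha\right> \left< p_\mu, s_\alpha\right>,$$
where the final sum runs over those $\alpha$ that are partitions of both $|\lambda|$ and $|\mu|$.

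First I would dispose of the case $|\lambda| \neq |\mu|$: then no partition $\alpha$ indexes a term in both expansions, the sum is empty, and $\left< \bfp_\lambda, \bfp_\mu\right>_@ = 0$, consistent with $z_\lambda \delta_{\lambda\mu}$. In the case $|\lambda| = |\mu| = n$ I would identify $\left< p_\lambda, s_\alpha\right> = \chi^\alpha(\lambda)$, the irreducible $S_n$-character value recorded in the preliminaries, so that the remaining sum becomes $\sum_{\alpha \vdash n} \chi^\alpha(\lambda) \chi^\alpha(\mu)$. The column orthogonality relations for the character table of $S_n$ then evaluate this to $z_\lambda \delta_{\lambda\mu}$, completing the proof.

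There is no serious obstacle here: the statement is essentially that the $\langle\cdot,\cdot\rangle_@$-orthogonality of the $\bfp_\lambda$ is the image, under the change of basis \eqref{eq:ptost}, of the column orthogonality of symmetric group characters. The only point requiring a moment's care is the degree bookkeeping, namely checking that the two expansions share index sets exactly when $|\lambda|=|\mu|$, so that the $z_\lambda\delta_{\lambda\mu}$ on the right emerges with the correct vanishing. As an alternative route one could bypass \eqref{eq:ptost} entirely and argue through the Frobenius map: by the identity \eqref{eq:twoscalarrelations} one has $\phi_n(\bfp_\lambda) = \sum_{\alpha \vdash |\lambda|} \chi^\alpha(\lambda)\, s_{(n-|\lambda|,\alpha)}$, and the Hall-orthonormality of the distinct Schur functions $s_{(n-|\lambda|,\alpha)}$, valid for $n$ large, reproduces the same character sum and hence the same answer.
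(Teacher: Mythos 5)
Your proposal is correct and follows essentially the same route as the paper: expand both $\bfp_\lambda$ and $\bfp_\mu$ via \eqref{eq:ptost}, use the $@$-orthonormality of the $\st_\nu$, and reduce to $\sum_{\nu}\left< p_\lambda, s_\nu\right>\left< p_\mu, s_\nu\right>$. The paper finishes by recognizing this sum as $\left< p_\lambda, p_\mu\right>$ via Schur orthonormality, which is the same fact as the column orthogonality of characters that you invoke.
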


\begin{proof}
By Equation \eqref{eq:ptost},
$$\left< \bfp_\lambda, \bfp_\mu \right>_@ = \sum_{\gamma \vdash |\lambda|}
\sum_{\nu \vdash |\mu|} \left< p_\lambda, s_\gamma \right>
\left< p_\mu, s_\nu \right> \left< \st_\gamma, \st_\nu \right>_@
= \sum_{\nu \vdash |\mu|} \left< p_\lambda, s_\nu \right>
\left< p_\mu, s_\nu \right>~.$$
This expression is equal to $0$ if $|\lambda|$ is not equal to $|\mu|$.  If they
are equal, the right hand side is equal to $\left< p_\lambda, p_\mu \right>
= z_\lambda \delta_{\lambda\mu}$~.
\end{proof}

\section{Products of character bases}
\label{sec:prods}

In \cite{OZ} we showed that the structure coefficients for the character basis $\st_\lambda$
are the reduced (or stable) Kronecker coefficients, $\bar{g}_{\alpha,\beta}^\gamma$,
$$\st_\alpha \st_\beta = \sum_\gamma \bar{g}_{\alpha,\beta}^\gamma \st_\gamma.$$
One of the main motivations for introducing these  bases and developing their properties
is that it will hopefully lead to a combinatorial interpretation for these coefficients.

In \cite{OZ2} we studied several combinatorial formulae for coefficients of repeated products of
character bases in terms of multiset tableaux satisfying a lattice condition.  In particular, we
gave a combinatorial interpretation for the coefficient of $\st_\nu$ in products of the form

$$\st_{\lambda_1} \st_{\lambda_2} \cdots \st_{\lambda_r} \st_\gamma,$$
where $\gamma$ is any partition and $\lambda_i$'s are positive integers.  Notice that these products
contain the Pieri rule as a special case.  Unfortunately, we do not have the means to
extend this to a combinatorial interpretation for the coefficients $\bar{g}_{\alpha,\beta}^\gamma$.
In order to gain a better understanding we develop product formulae for other bases in this section and
coproduct formulae in Section \ref{sec:coprods}.

\subsection{Products on the power sum bases}
In contrast with the power symmetric function $ p_\lambda$ that is multiplicative,
the new bases $\{\mathbf{p}_\gamma\}$ and $\{\bfpb_\gamma\}$ are not multiplicative
unless the parts of the partition are disjoint.  Therefore, in this section we begin by
describing the structure of products of the form $\bfp_{a^b} \bfp_{a^c}$  and
$\bfpb_{a^b} \bfpb_{a^c}$.

Recall that the $k$-falling factorial, $(x)_k$, has the following product structure
\begin{equation}\nonumber
(x)_n (x)_m = \sum_{k\geq0} \binom mk \binom nk k! (x)_{m+n-k} ~.
\end{equation}
Therefore we can at least easily compute the following
product of one of the power sum bases from this formula.
\begin{prop}  For a positive integers $i, r$ and $s$,
\begin{equation}
\bfpb_{i^r} \bfpb_{i^s} = \sum_{k\geq0}
\binom rk \binom sk k! i^{k} \bfpb_{i^{r+s-k}}~.
\end{equation}
If $m_i(\lambda) = 0$, then
$\bfpb_{i^r} \bfpb_\lambda = \bfpb_{(i^r) \cup \lambda}$.
\end{prop}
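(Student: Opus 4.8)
The plan is to reduce the product to a one-variable falling-factorial identity applied to a single symmetric function. Writing $q_i := \frac{1}{i}\sum_{d|i}\mu(i/d)\,p_d$, the definition in \eqref{eq:powerrects} reads $\bfpb_{i^r} = i^r (q_i)_r$, that is, the $r$-th falling factorial polynomial $(x)_r = x(x-1)\cdots(x-r+1)$ evaluated at the element $q_i \in Sym$. Since $Sym$ is a commutative ring, I would first write $\bfpb_{i^r}\bfpb_{i^s} = i^{r+s}\,(q_i)_r (q_i)_s$.

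Next I would apply the product formula for falling factorials stated just above the proposition, namely $(x)_r(x)_s = \sum_{k\geq 0}\binom rk\binom sk k!\,(x)_{r+s-k}$, with $x$ specialized to $q_i$. This is legitimate because it is an identity of polynomials in $\ZZ[x]$ and hence remains valid after substituting any element of a commutative ring for $x$; there is no convergence issue since only the finitely many terms with $k\leq\min(r,s)$ are nonzero. This yields $(q_i)_r(q_i)_s = \sum_{k\geq 0}\binom rk\binom sk k!\,(q_i)_{r+s-k}$.

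The final step is bookkeeping of the powers of $i$. Since $\bfpb_{i^{r+s-k}} = i^{r+s-k}(q_i)_{r+s-k}$, I would substitute $(q_i)_{r+s-k} = i^{-(r+s-k)}\bfpb_{i^{r+s-k}}$ into the previous line and collect the prefactor $i^{r+s}\cdot i^{-(r+s-k)} = i^{k}$, which reproduces the claimed sum exactly. I do not anticipate a real obstacle: the entire content is that the $\bfpb_{i^r}$ are, up to the scalar $i^r$, falling factorials of the single generator $q_i$, so their products are governed by the classical falling-factorial convolution. The one point meriting a word of justification is precisely the passage from the polynomial identity in $\ZZ[x]$ to its evaluation at $q_i$.

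For the second assertion I would simply unfold the definition $\bfpb_\lambda = \prod_{j\geq 1}\bfpb_{j^{m_j(\lambda)}}$. The hypothesis $m_i(\lambda)=0$ means the part-size $i$ does not occur in $\lambda$, so the factor indexed by $j=i$ is trivial ($\bfpb_{i^0}=1$) and multiplying by $\bfpb_{i^r}$ simply inserts it; the resulting product is the definition of $\bfpb_{(i^r)\cup\lambda}$, whose multiplicities agree with those of $\lambda$ except that $m_i = r$. The only thing to note is that no interaction occurs between the inserted factor and the existing ones, which is exactly what $m_i(\lambda)=0$ guarantees; were this hypothesis dropped, one would instead have to invoke the first formula of the proposition rather than plain multiplicativity.
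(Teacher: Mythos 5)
Your proposal is correct and follows exactly the route the paper intends: the paper derives the formula directly from the falling-factorial convolution $(x)_n(x)_m=\sum_k\binom mk\binom nk k!\,(x)_{m+n-k}$ stated just above the proposition, applied to the element $\frac1i\sum_{d|i}\mu(i/d)p_d$, with the same bookkeeping of the scalar $i^{r+s}\cdot i^{-(r+s-k)}=i^k$. Your explicit remark that the polynomial identity in $\ZZ[x]$ may be evaluated at any element of the commutative ring $Sym$ is the only justification needed, and the second assertion is, as you say, immediate from the multiplicative definition of $\bfpb_\gamma$.
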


It also implies by making a change of basis back and
forth between $\bfp_{i^r}$ to $\bfpb_{i^s}$ that we have
the following expansion for the product of the $\bfp_{i^r}$
elements.
\begin{prop} Define the coefficients
$$c_{i,r,s,a} =\sum_{\ell \geq 0} \sum_{d \geq 0}
\sum_{k \geq 0} (-1)^{r+s-\ell-d} \binom{r}{\ell}
\binom{s}{d} \binom{\ell}{k} \binom{d}{k}
\binom{\ell+d-k}{a} k! i^{k}$$
then
\begin{equation}
\bfp_{i^r} \bfp_{i^s} = \sum_{a=0}^{r+s}
c_{i,r,s,a} \bfp_{i^a}~.
\end{equation}
If $m_i(\lambda) = 0$, then
$\bfp_{i^r} \bfp_\lambda = \bfp_{(i^r) \cup \lambda}$.
\end{prop}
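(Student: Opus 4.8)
The plan is to reduce the product entirely to the $\bfpb$ basis, where the product rule is already available from the previous proposition, and then transfer the answer back into the $\bfp$ basis. The two transfer maps are precisely the defining relation
$$\bfp_{i^r} = \sum_{\ell=0}^r (-1)^{r-\ell} \binom{r}{\ell} \bfpb_{i^\ell}$$
from \eqref{eq:powerrects} and its M\"obius inverse $\bfpb_{i^m} = \sum_{a=0}^m \binom{m}{a} \bfp_{i^a}$.

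First I would expand both factors through the forward relation, producing
$$\bfp_{i^r}\bfp_{i^s} = \sum_{\ell,d} (-1)^{r+s-\ell-d} \binom{r}{\ell} \binom{s}{d} \bfpb_{i^\ell}\bfpb_{i^d}~.$$
Next I would apply the $\bfpb$ product formula to each term $\bfpb_{i^\ell}\bfpb_{i^d}$; this introduces the third summation index $k$ together with the weight $\binom{\ell}{k}\binom{d}{k} k!\, i^k$ and replaces the pair by $\bfpb_{i^{\ell+d-k}}$. Finally I would re-expand each $\bfpb_{i^{\ell+d-k}}$ back into the $\bfp$ basis via the inverse relation, which contributes the factor $\binom{\ell+d-k}{a}$ on the term $\bfp_{i^a}$. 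Collecting the coefficient of $\bfp_{i^a}$ over the three nested sums then reproduces the stated expression for $c_{i,r,s,a}$ verbatim, and the range $0 \le a \le r+s$ follows because $\binom{r}{\ell}$ and $\binom{s}{d}$ force $\ell \le r$ and $d \le s$, whence $a \le \ell+d-k \le r+s$.

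The second assertion is immediate from the definition $\bfp_\gamma = \prod_{i\ge1} \bfp_{i^{m_i(\gamma)}}$: when $m_i(\lambda) = 0$ the factor indexed by $i$ is absent from $\bfp_\lambda$, so prepending $\bfp_{i^r}$ simply supplies that missing factor and yields $\bfp_{(i^r)\cup\lambda}$.

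I do not expect a genuine obstacle here; the argument is a direct three-fold substitution. The only point that requires care is the bookkeeping of the summation ranges, but since every binomial coefficient vanishes outside its natural range one may extend all indices freely to $\ell,d,k,a \ge 0$ without altering the sums, which is exactly how $c_{i,r,s,a}$ is written.
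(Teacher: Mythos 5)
Your argument is correct and is exactly the route the paper intends: the proposition is stated there as an immediate consequence of "making a change of basis back and forth between $\bfp_{i^r}$ and $\bfpb_{i^s}$," which is precisely your three-fold substitution through the $\bfpb$ product rule. The bookkeeping of the signs, the factor $\binom{\ell}{k}\binom{d}{k}k!\,i^k$, and the re-expansion coefficient $\binom{\ell+d-k}{a}$ all match the stated $c_{i,r,s,a}$.
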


\subsection{Products of induced trivial characters}
A combinatorial interpretation for the
Kronecker product of two complete symmetric functions
expanded in the complete basis is listed as
Exercise 23 (e) in section I.7
of \cite{Mac} and
Exercise 7.84 (b) in \cite{Stanley}.
The earliest reference to this result
that we are aware of is due to Garsia and Remmel \cite{GR}.
The formula for this Kronecker product is
\begin{equation}\label{eq:usualhproduct}
h_{\lambda} \ast h_{\mu}
= \sum_{M} \prod_{i=1}^{\ell(\la)} \prod_{j=1}^{\ell(\mu)} h_{M_{ij}}
\end{equation}
summed over all matrices $M$ of non-negative integers with $\ell(\la)$
rows, $\ell(\mu)$ columns and row sums are given by the vectors
$\la$ and column sums by the vector $\mu$.  For $\lambda, \mu, \nu\vdash n$,
we define $d_{\lambda \mu\nu}$ as the coefficient of $h_\nu$ in $h_\lambda \ast h_\mu$.

Let $\lambda$, $\mu$ and $\nu$ be partitions, then for
$n\geq|\la|+|\mu|+|\nu|$, let $\db_{\la\mu\nu} := d_{(n-|\la|,\la)(n-|\mu|,\mu)(n-|\nu|,\nu)}$ denote
the common (stable) coefficient of $h_{(n-|\nu|,\nu)}$
in $h_{(n-|\la|,\la)} \ast h_{(n-|\mu|,\mu)}$.

\begin{prop}  For partitions $\lambda$ and $\mu$
$$\hht_\la \hht_\mu = \sum_{\nu: |\nu| \leq |\la|+|\mu|}
\db_{\la\mu\nu} \hht_\nu~.$$
\end{prop}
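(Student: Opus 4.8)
The plan is to verify the identity by evaluating both sides at the eigenvalue multisets $\Xi_\gamma$ and appealing to Corollary \ref{cor:evalsf}. Since $\hht_\la$ has top degree $|\la|$, both $\hht_\la \hht_\mu$ and $\sum_{\nu}\db_{\la\mu\nu}\hht_\nu$ (where $|\nu|\leq|\la|+|\mu|$) are symmetric functions of degree at most $|\la|+|\mu|$. Hence it suffices to show that they agree on $\Xi_\gamma$ for every partition $\gamma$ with $|\gamma|=n$ sufficiently large.

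First I would convert the product into a Kronecker product. Because $f\mapsto f[\Xi_\gamma]$ is a ring homomorphism, $(\hht_\la\hht_\mu)[\Xi_\gamma]=\hht_\la[\Xi_\gamma]\,\hht_\mu[\Xi_\gamma]$, and using $\hht_\la[\Xi_\gamma]=\left< h_{(n-|\la|,\la)},p_\gamma\right>$ (valid once $n\geq|\la|$) this becomes $\left< h_{(n-|\la|,\la)},p_\gamma\right>\left< h_{(n-|\mu|,\mu)},p_\gamma\right>$. The key algebraic observation is the elementary coefficient identity for the Kronecker product: for homogeneous $A,B$ of degree $n$, writing $A=\sum_\gamma\left< A,p_\gamma\right>p_\gamma/z_\gamma$ and using $p_\gamma\ast p_\delta=z_\gamma\delta_{\gamma\delta}p_\gamma$, one obtains $\left< A\ast B,p_\gamma\right>=\left< A,p_\gamma\right>\left< B,p_\gamma\right>$. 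Applying this with $A=h_{(n-|\la|,\la)}$ and $B=h_{(n-|\mu|,\mu)}$ rewrites the right-hand side as $\left< h_{(n-|\la|,\la)}\ast h_{(n-|\mu|,\mu)},p_\gamma\right>$; this is exactly the relation $\phi_n(\hht_\la\hht_\mu)=h_{(n-|\la|,\la)}\ast h_{(n-|\mu|,\mu)}$ read coefficient by coefficient.

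Next I would expand the Kronecker product with the Garsia--Remmel formula \eqref{eq:usualhproduct} and establish its stabilization. The matrices $M$ summed over have row sums $(n-|\la|,\la_1,\ldots)$ and column sums $(n-|\mu|,\mu_1,\ldots)$, and each contributes $h_{\nu'}$, where $\nu'$ is the partition formed by the entries $M_{ij}$. The crucial estimate is a lower bound on the top-left entry: the remaining entries of the first row sum to at most $|\mu|$, so $M_{11}\geq(n-|\la|)-|\mu|=n-|\la|-|\mu|$. For $n>2(|\la|+|\mu|)$ this forces $M_{11}$ to be strictly larger than the total $\leq|\la|+|\mu|$ of all other entries, so every $\nu'$ that occurs has the form $(n-|\nu|,\nu)$ with $|\nu|\leq|\la|+|\mu|$. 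By the definition of the stable coefficient $\db_{\la\mu\nu}$, once $n\geq 2(|\la|+|\mu|)$ we get $h_{(n-|\la|,\la)}\ast h_{(n-|\mu|,\mu)}=\sum_{\nu:\,|\nu|\leq|\la|+|\mu|}\db_{\la\mu\nu}\,h_{(n-|\nu|,\nu)}$.

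Finally, pairing with $p_\gamma$ and recognizing $\left< h_{(n-|\nu|,\nu)},p_\gamma\right>=\hht_\nu[\Xi_\gamma]$ yields $(\hht_\la\hht_\mu)[\Xi_\gamma]=\sum_\nu\db_{\la\mu\nu}\,\hht_\nu[\Xi_\gamma]$ for all sufficiently large $\gamma$, and Corollary \ref{cor:evalsf} then gives equality as symmetric functions. I expect the only genuinely delicate step to be the stabilization in the third paragraph: one must confirm both that the large part $M_{11}$ dominates (so the $h$-support consists solely of shapes $(n-|\nu|,\nu)$) and that the resulting coefficient is independent of $n$ and equals $\db_{\la\mu\nu}$. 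Everything else is formal.
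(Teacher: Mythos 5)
Your proposal is correct and follows essentially the same route as the paper: evaluate at $\Xi_\gamma$, rewrite the product of scalar products as $\left< h_{(n-|\la|,\la)}\ast h_{(n-|\mu|,\mu)}, p_\gamma\right>$, expand using the stable coefficients $\db_{\la\mu\nu}$, and conclude by Corollary \ref{cor:evalsf}. The only difference is that you spell out the stabilization of the Kronecker coefficients via the Garsia--Remmel matrix formula, a detail the paper absorbs into the definition of $\db_{\la\mu\nu}$ as the common stable coefficient.
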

\begin{proof} For each partition $\gamma$
such that $|\gamma|\geq n$ ($n$ sufficiently large), calculation in terms of
characters shows that
\begin{align*}
\hht_\la[\Xi_\ga] \hht_\mu[\Xi_\ga] &=
\left< h_{|\ga|-|\la|} h_\la, p_\ga \right>
\left< h_{|\ga|-|\mu|} h_\mu, p_\ga \right>\\
&= \left< h_{(|\ga|-|\la|,\la)}\ast h_{(|\ga|-|\mu|,\mu)}, p_\ga \right>\\
&= \sum_{|\nu| \leq |\la|+|\mu|}
\db_{\la\mu\nu} \left< h_{(|\ga|-|\nu|,\nu)}, p_\ga \right>\\
&= \sum_{|\nu| \leq |\la|+|\mu|}
\db_{\la\mu\nu} \hht_\nu[\Xi_\ga]~.
\end{align*}
We can conclude by Proposition \ref{prop:rootsimplsf} that the structure
coefficients of the induced trivial characters are the coefficients
$\db_{\la\mu\nu}$.
\end{proof}

Let $S$ be a multiset and $T$ a set.  The restriction of $S$ to
$T$ is the multiset $S|_T = \dcl v \in S : v \in T \dcr$.  Then we can
define the restriction of a multiset partition to the content $T$
by $\pi|_T = \dcl S|_T : S \in \pi \dcr$.  If necessary in this operation
we throw away empty multisets in $\pi|_T$.

We will use the notation $\pi \# \tau$ to represent
a set of multiset partitions
that will appear in the product.
Let $\pi$ and $\tau$ be multiset partitions on disjoint sets $S$ and $T$.
\begin{equation}\label{eq:smashproduct}
\pi \# \tau =
\{ \theta : \theta \mvdash S \cup T, \theta|_{S} = \pi,
\theta|_{T} = \tau \}
\end{equation}
That is, $\theta \in \pi \# \tau$ means that $\theta$ is of the form
\begin{equation}\nonumber
\theta = \dcl S_{i_1}, \ldots, S_{i_{\ell(\pi)-k}},
T_{j_1},\ldots,T_{j_{\ell(\tau)-k}},
S_{i_1'} \cup T_{j_1'}, \ldots, S_{i_k'} \cup T_{j_k'} \dcr
\end{equation}
where $\{ i_1, i_2, \ldots, i_{\ell(\pi)-k}, i_1', i_2', \ldots, i_k' \}
= \{1,2,\ldots,\ell(\pi)\}$
and $\{ j_1, j_2, \ldots, j_{\ell(\tau)-k}, j_1', j_2', \ldots,$ $ j_k' \}
= \{1,2,\ldots,\ell(\tau)\}$.

We propose the following different, but equivalent
combinatorial interpretation for this product of the induced trivial character basis.

\begin{prop} \label{prop:prodht} For multiset partitions
$\pi \mvdash S$ and $\tau \mvdash T$ where the multisets
$S$ and $T$ are disjoint,
$$\hht_{\mt(\pi)} \hht_{\mt(\tau)} = \sum_{\theta \in \pi\#\tau} \hht_{\mt(\theta)}~.$$
\end{prop}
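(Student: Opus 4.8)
\section*{Proof proposal}

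The plan is to deduce this from the preceding proposition, which already expands the product as
$$\hht_{\mt(\pi)}\hht_{\mt(\tau)} = \sum_{\nu}\db_{\mt(\pi),\mt(\tau),\nu}\,\hht_\nu,$$
and then to re-sum the combinatorial right-hand side so that it matches term by term. After grouping $\sum_{\theta\in\pi\#\tau}\hht_{\mt(\theta)}$ according to the value of $\mt(\theta)$, the whole statement reduces to the purely numerical identity
$$\bigl|\{\theta\in\pi\#\tau : \mt(\theta)=\nu\}\bigr| = \db_{\mt(\pi),\mt(\tau),\nu}$$
for every partition $\nu$. Write $\la=\mt(\pi)$ and $\mu=\mt(\tau)$, let $A_1,\dots,A_{\ell(\la)}$ be the distinct multisets occurring in $\pi$ (so $A_i$ occurs $\la_i$ times) and $D_1,\dots,D_{\ell(\mu)}$ the distinct multisets occurring in $\tau$ (so $D_j$ occurs $\mu_j$ times).

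Next I would unwind $\db_{\la\mu\nu}$ through the matrix formula \eqref{eq:usualhproduct}. For $N$ large it is the coefficient of $h_{(N-|\nu|,\nu)}$ in $h_{(N-|\la|,\la)}\ast h_{(N-|\mu|,\mu)}$, hence the number of non-negative integer matrices $M=(M_{ij})$ with rows indexed $0,\dots,\ell(\la)$ and columns $0,\dots,\ell(\mu)$, having row sums $(N-|\la|,\la)$ and column sums $(N-|\mu|,\mu)$, whose multiset of entries sorts to $(N-|\nu|,\nu)$. The preliminary observation is that every off-corner entry is bounded independently of $N$ (by $\la_i$, by $\mu_j$, or by $\min(\la_i,\mu_j)$), while $M_{00}=N-|\la|-\sum_{j\ge 1}M_{0j}$ grows with $N$; so for $N$ large $M_{00}$ is the unique part exceeding $|\la|+|\mu|$, forcing $M_{00}=N-|\nu|$. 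Thus $\db_{\la\mu\nu}$ counts exactly those matrices whose non-corner entries $\{M_{ij}:(i,j)\ne(0,0)\}$ form the partition $\nu$.

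The heart of the argument is a bijection between these matrices and the elements of $\pi\#\tau$. Given $\theta$, I would set $M_{ij}$ for $i,j\ge 1$ to be the multiplicity of the merged block $A_i\cup D_j$ in $\theta$, let $M_{i0}$ be the multiplicity of the unmerged block $A_i$, let $M_{0j}$ be the multiplicity of the unmerged $D_j$, and fix $M_{00}$ by the row-$0$ sum. The margins come out consistently because of the identity $\sum_{i\ge 1}M_{i0}-\sum_{j\ge 1}M_{0j}=|\la|-|\mu|$, which holds since each of $|\la|$ and $|\mu|$ splits as (unmerged blocks)$+$(merged pairs) with the same number $\sum_{i,j\ge 1}M_{ij}$ of merged pairs. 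Conversely, a matrix with the correct margins reconstructs $\theta$ by taking $M_{ij}$ copies of $A_i\cup D_j$, $M_{i0}$ copies of $A_i$, and $M_{0j}$ copies of $D_j$; then $\theta|_S=\pi$ and $\theta|_T=\tau$ because $A_i$ appears as an $S$-part exactly $\sum_{j\ge 0}M_{ij}=\la_i$ times, and symmetrically on the $T$ side. Since $S$ and $T$ are disjoint and the $A_i$ (resp.\ the $D_j$) are pairwise distinct, the blocks $A_i\cup D_j$, the $A_i$, and the $D_j$ are pairwise distinct multisets, so the non-corner entries of $M$ are precisely the block multiplicities of $\theta$; that is, $\mt(\theta)$ is the partition formed by the non-corner entries, exactly matching the description of $\db_{\la\mu\nu}$ above.

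The main obstacle I anticipate is the multiset-versus-set bookkeeping: different choices of which copies to merge can yield the same multiset partition $\theta$, so $\pi\#\tau$ is genuinely a set rather than a list of labeled matchings. The bijection sidesteps this correctly because a matrix records only how many merges of type $(i,j)$ occur and not which copies are involved, so coincident multiset partitions are automatically identified; the one point that must be checked with care is precisely that no two of the blocks $A_i\cup D_j$, $A_i$, $D_j$ coincide, which is where disjointness of $S$ and $T$ together with distinctness of the $A_i$ and of the $D_j$ enter. The proposition then follows by substituting this count into the expansion furnished by the preceding proposition.
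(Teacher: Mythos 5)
Your proposal is correct and follows essentially the same route as the paper: reduce to the identity $|\{\theta\in\pi\#\tau:\mt(\theta)=\nu\}|=\db_{\mt(\pi)\mt(\tau)\nu}$ and prove it by the same bijection between elements of $\pi\#\tau$ and non-negative integer matrices with row sums $(N-|\la|,\la)$ and column sums $(N-|\mu|,\mu)$, with the matrix entries recording multiplicities of merged and unmerged blocks. Your write-up is in fact more careful than the paper's on the points it glosses over (the stabilization of the corner entry for large $N$, the margin consistency, and the distinctness of the blocks $A_i\cup D_j$, $A_i$, $D_j$).
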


Before we give a proof of this proposition by showing an equivalence with
Equation \eqref{eq:usualhproduct}, we provide an example
to try to clarify any subtleties of the notation.

\begin{example}
Let $\pi = \dcl\dcl1\dcr,\dcl1\dcr,\dcl2\dcr\dcr$ and
$\tau = \dcl\dcl3\dcr,\dcl3\dcr,\dcl4\dcr\dcr$

Below we list the multiset partitions in $\pi \# \tau$
along with the corresponding partition $\mt(\theta)$.
$$\dcl\dcl1\dcr,\dcl1\dcr,\dcl2\dcr,\dcl3\dcr,\dcl3\dcr,\dcl4\dcr\dcr \rightarrow 2211
\hskip .3in\dcl\dcl1,3\dcr,\dcl1\dcr,\dcl2\dcr,\dcl3\dcr,\dcl4\dcr\dcr \rightarrow 11111$$
$$\dcl\dcl1\dcr,\dcl1\dcr,\dcl2,3\dcr,\dcl3\dcr,\dcl4\dcr\dcr \rightarrow 2111
\hskip .3in\dcl\dcl1\dcr,\dcl1,4\dcr,\dcl2\dcr,\dcl3\dcr,\dcl3\dcr\dcr \rightarrow 2111$$
$$\dcl\dcl1\dcr,\dcl1\dcr,\dcl2,4\dcr,\dcl3\dcr,\dcl3\dcr\dcr \rightarrow 221
\hskip .3in\dcl\dcl1,3\dcr,\dcl1,3\dcr,\dcl2\dcr,\dcl4\dcr\dcr \rightarrow 211 $$
$$\dcl\dcl1,3\dcr,\dcl1,4\dcr,\dcl2\dcr,\dcl3\dcr\dcr \rightarrow 1111
\hskip .3in\dcl\dcl1\dcr,\dcl1,3\dcr,\dcl2,3\dcr,\dcl4\dcr\dcr \rightarrow 1111$$
$$\dcl\dcl1\dcr,\dcl1,4\dcr,\dcl2,3\dcr,\dcl3\dcr\dcr \rightarrow 1111
\hskip .3in\dcl\dcl1\dcr,\dcl1,3\dcr,\dcl2,4\dcr,\dcl3\dcr\dcr \rightarrow 1111$$
$$\dcl\dcl1,3\dcr,\dcl1,3\dcr,\dcl2,4\dcr\dcr \rightarrow 21
\hskip .3in\dcl\dcl1,3\dcr,\dcl1,4\dcr,\dcl2,3\dcr\dcr \rightarrow 111$$
As a consequence of Proposition \ref{prop:prodht} we conclude
\begin{equation}\nonumber
\hht_{21} \hht_{21} =
\hht_{111} + 4\hht_{1111} + \hht_{11111}
+ \hht_{21} + \hht_{211} + 2\hht_{2111} + \hht_{221}
+ \hht_{2211}
\end{equation}
or in terms of Kronecker products with $n=8$,
\begin{equation}\nonumber
h_{521} \ast h_{521} =
h_{5111} + 4h_{41111} + h_{311111}
+ h_{521} + h_{4211} + 2h_{32111} + h_{3221}
+ h_{22211}~.
\end{equation}
\end{example}

\begin{proof}
We define a bijection between matrices whose row sums are
$(n-|\la|,\la)$ and whose column sums are $(n-|\mu|,\mu)$
and elements of $\pi \# \tau$ where $\pi$ is a multiset
partition such that $\mt(\pi)$ is $\la$ and $\tau$ is a
multiset partition such that $\mt(\tau) = \mu$.

Let $M$ be such a matrix.  The first row of this matrix
has sum equal to $n-|\la|$ and the sum of row $i$
of this matrix represents the number of times that
some multiset $A$ repeats in $\pi$ (it does not matter what that
multiset is, just that it repeats $\sum_j M_{ij}$ times).
The sum of column $j$ of this matrix (for $j>1$) represents
the number of times that a particular part of the
multiset $B$ repeats in $\tau$ (again, it does not matter the
content of that multiset, just that it is different than
the others).  Therefore the entry $M_{ij}$ is the
number of times that $A \cup B$ repeats in the
multiset $\theta \in \pi \# \tau$.  The value of $M_{i1}$
is equal to the number of times that $A$ appears in
$\theta$ and the value of $M_{1j}$ is the number of times
that $B$ appears in $\theta$.
\end{proof}

\begin{example}
To ensure that the bijection described in the proof is
clear we show the correspondence between some specific multiset
partitions and the non-negative integer matrices to which they correspond.
The second and third row will represent the multiplicities of
$\dcl1\dcr$ and $\dcl2\dcr$ respectively.  The second and third
column will represent the multiplicities of
$\dcl3\dcr$ and $\dcl4\dcr$ respectively.
Rather than consider all multiset partitions,
we will consider only the 4 that we calculated in the last
example that have $\mt(\pi) = 1111.$

$$\dcl\dcl1,3\dcr,\dcl1,4\dcr,\dcl2\dcr,\dcl3\dcr\dcr \leftrightarrow
\begin{bmatrix}n-4&1&0\\0&1&1\\1&0&0\end{bmatrix}$$
$$\dcl\dcl1\dcr,\dcl1,3\dcr,\dcl2,3\dcr,\dcl4\dcr\dcr \leftrightarrow
\begin{bmatrix}n-4&0&1\\1&1&0\\0&1&0\end{bmatrix}$$
$$\dcl\dcl1\dcr,\dcl1,4\dcr,\dcl2,3\dcr,\dcl3\dcr\dcr \leftrightarrow
\begin{bmatrix}n-4&1&0\\1&0&1\\0&1&0\end{bmatrix}$$
$$\dcl\dcl1\dcr,\dcl1,3\dcr,\dcl2,4\dcr,\dcl3\dcr\dcr \leftrightarrow
\begin{bmatrix}n-4&1&0\\1&1&0\\0&0&1\end{bmatrix}$$
\end{example}

The multiset partition notation is therefore not significantly different
than the integer matrices notation, but there are distinct advantages
to an interpretation in terms of multiset partitions.  The main one
is that the notion of multisets in the context of symmetric functions
leads to the combinatorial objects of multiset tableaux
that can be used as a possible object to keep track of stable Kronecker coefficients.

\section{Coproducts of the character bases}
\label{sec:coprods}

The coproduct of symmetric functions corresponds to restriction from $S_n$ to
$S_r \times S_t$ where $r+t=n$.  This is because the coproduct operation
is isomorphic to the operation
of replacing one set of variables with two in the power sum symmetric
function $p_k[X] \rightarrow p_k[X]+p_k[Y]$
and the evaluation of the symmetric function at the eigenvalues
of $S_r \times S_t$ in $S_n$ replaces the $X$ variables by eigenvalues of
the element of $S_r$ and the $Y$ variables by the eigenvalues of the element of $S_t$.
Therefore, formulae involving coproducts of character
basis compute multiplicities for these restrictions.
The main result of this section are coproduct formulae
for the analogues of the power sum bases, the $\hht$, $\st$ and $\xt$ bases.

For a basis $\{ b_\lambda \}$ of the symmetric functions, we
will refer to the {\it coproduct structure coefficients} as the coefficients
of $b_\mu \otimes b_\nu$ in $\Delta( b_\lambda )$ where $\lambda, \mu, \nu$ are all
partitions.  For bases that are of homogeneous degree we have the restriction that
$|\mu| + |\nu| = |\lambda|$, but for the bases here it could be the case that
$|\mu| + |\nu| \leq |\lambda|$.  To summarize the results in this section
we state the following theorem.

\begin{theorem} The coproduct formulae for the character bases are given by:
\begin{itemize}
\item (Theorem \ref{th:coprodst})
For partitions $\lambda, \mu$ and $\nu$ with $|\mu| + |\nu| \leq |\lambda|$,
the coproduct structure coefficients of the basis $\{ \st_\lambda \}$
are $\sum_{\ga} c_{\mu\ga}^\la$
where the sum is over all partitions
$\ga$ such that $\ga/\nu$ is a horizontal strip of size $|\la|-|\mu|-|\nu|$ and
$c_{\mu\ga}^\la$ are the Littlewood-Richardson coefficients.
\item (Propositions \ref{prop:coprodbfpb}, \ref{prop:coprodht}
and Corollary \ref{prop:coprodxt} respectively)
The bases $\{ \bfpb_\lambda \}$, $\{ \hht_\lambda \}$ and $\{ \xt_\lambda \}$
have the same coproduct structure coefficients as the power sum $\{p_\lambda\}$, complete
$\{h_\lambda\}$ and
Schur bases $\{s_\lambda\}$ (respectively).
\item (Proposition \ref{prop:coprodbfp}) For the basis $\{\bfp_\lambda\}$, the second analogue of the power sum basis,
we have
$\Delta(\bfp_{\gamma}) := \prod_{i \geq 1} \Delta(\bfp_{i^{m_i(\gamma)}})$ where
\begin{equation*}
\Delta(\bfp_{i^r})
= \sum_{d=0}^r \binom{r}{d}
\sum_{a=0}^d \binom{d}{a} \bfp_{i^a} \otimes \bfp_{i^{d-a}}~.
\end{equation*}
\end{itemize}
\end{theorem}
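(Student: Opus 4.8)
The plan is to route three of the four formulas through a single mechanism, the behaviour of the primitive generators $q_i := \frac1i\sum_{d\mid i}\mu(i/d)p_d$ under $\Delta$, and to treat the $\st_\la$ coproduct separately by a restriction/Littlewood--Richardson argument. Since each $p_d$ is primitive and $q_i$ is a $\QQ$-linear combination of the $p_d$, the element $q_i$ is primitive, $\Delta(q_i)=q_i\otimes 1+1\otimes q_i$. Because $\bfpb_{i^k}=i^k(q_i)_k$ and $\binom{x}{k}=(x)_k/k!$, the exponential generating function is $\sum_{k\ge0}\bfpb_{i^k}\frac{t^k}{k!}=(1+it)^{q_i}$, and the binomial transform in \eqref{eq:powerrects} gives $\sum_{r\ge0}\bfp_{i^r}\frac{t^r}{r!}=e^{-t}(1+it)^{q_i}$. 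I would apply $\Delta$ as a $\QQ$-algebra map, using that $q_i\otimes1$ and $1\otimes q_i$ commute so that $(1+it)^{q_i\otimes1+1\otimes q_i}=\bigl((1+it)^{q_i}\otimes1\bigr)\bigl(1\otimes(1+it)^{q_i}\bigr)$. For $\bfpb$ this produces $\sum_{a,b}(\bfpb_{i^a}\otimes\bfpb_{i^b})\frac{t^{a+b}}{a!\,b!}$, whose $t^r/r!$ coefficient is $\sum_a\binom{r}{a}\bfpb_{i^a}\otimes\bfpb_{i^{r-a}}$, i.e. the coproduct of $p_{i^r}$. For $\bfp$, writing $(1+it)^{q_i}=e^tF(t)$ with $F(t)=\sum_a\bfp_{i^a}t^a/a!$, the extra $e^{-t}$ absorbs one of the two $e^t$ factors, leaving $e^t\,(F(t)\otimes F(t))$; extracting the $t^r/r!$ coefficient gives the multinomial $\frac{r!}{a!\,b!\,(r-a-b)!}=\binom{r}{d}\binom{d}{a}$ with $d=a+b$, which is Proposition \ref{prop:coprodbfp}.

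Next I would deduce the second bullet. Let $\Phi\colon Sym\to Sym$ be the linear isomorphism $\Phi(p_\ga)=\bfpb_\ga$. Then $\hht_\la=\Phi(h_\la)$ and $\xt_\la=\Phi(s_\la)$, because the expansions \eqref{eq:pexpansionofcharbases} are exactly the images under $\Phi$ of $h_\la=\sum_\ga\langle h_\la,p_\ga\rangle p_\ga/z_\ga$ and $s_\la=\sum_\ga\chi^\la(\ga)p_\ga/z_\ga$. I claim $\Phi$ is a coalgebra morphism, $\Delta\circ\Phi=(\Phi\otimes\Phi)\circ\Delta$, which it suffices to check on $p_\ga$. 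Since $\Delta$ is multiplicative and $\bfpb_\ga=\prod_i\bfpb_{i^{m_i(\ga)}}$, the first-paragraph formula for each $\Delta(\bfpb_{i^{m_i}})$ keeps both tensor factors indexed by $i$; the factors for distinct $i$ then recombine with no correction (using $\bfpb_{i^r}\bfpb_\la=\bfpb_{(i^r)\cup\la}$ when $m_i(\la)=0$), so $\Delta(\bfpb_\ga)=\sum_{\al\cup\be=\ga}\bigl(\prod_i\binom{m_i(\ga)}{m_i(\al)}\bigr)\bfpb_\al\otimes\bfpb_\be=(\Phi\otimes\Phi)\Delta(p_\ga)$. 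As a coalgebra isomorphism carrying $p,h,s$ to $\bfpb,\hht,\xt$, the map $\Phi$ forces these three bases to have the same coproduct structure coefficients as $p$, $h$, $s$, establishing Proposition \ref{prop:coprodbfpb}, Proposition \ref{prop:coprodht}, and Corollary \ref{prop:coprodxt}.

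For the first bullet I would write $\Delta(\st_\la)=\st_\la[X+Y]=\sum_{\mu,\nu}a_{\mu\nu}\,\st_\mu[X]\st_\nu[Y]$ and evaluate at $X=\Xi_\rho$, $Y=\Xi_\sigma$ with $\rho\vdash r$, $\sigma\vdash t$, $r,t$ large and $n=r+t$. Since $\st_\la[\Xi_\rho+\Xi_\sigma]=\st_\la[\Xi_{\rho\cup\sigma}]=\chi^{(n-|\la|,\la)}(\rho\cup\sigma)$, comparison with $\sum_{\mu,\nu}a_{\mu\nu}\chi^{(r-|\mu|,\mu)}(\rho)\chi^{(t-|\nu|,\nu)}(\sigma)$ identifies $a_{\mu\nu}$ with the multiplicity of $\chi^{(r-|\mu|,\mu)}\otimes\chi^{(t-|\nu|,\nu)}$ in the restriction of $\chi^{(n-|\la|,\la)}$ to $S_r\times S_t$, namely the stable coefficient $c^{(n-|\la|,\la)}_{(r-|\mu|,\mu),(t-|\nu|,\nu)}$. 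The remaining task is to evaluate this coefficient in the stable range: in any Littlewood--Richardson filling of $(n-|\la|,\la)/(r-|\mu|,\mu)$ with content $(t-|\nu|,\nu)$, the long top row sits strictly to the right of every lower row and so, by the lattice condition, is filled entirely with $1$'s, consuming $(n-|\la|)-(r-|\mu|)$ of them and leaving exactly $k:=|\la|-|\mu|-|\nu|$ ones to place in $\la/\mu$. These ones occupy a horizontal strip $\ga/\mu$ and the entries $\ge 2$ form a lattice filling of content $\nu$, so $a_{\mu\nu}=\langle s_\mu s_\nu h_k, s_\la\rangle=\sum_\ga c^\la_{\mu\ga}$ summed over $\ga$ with $\ga/\nu$ a horizontal strip of size $k$ (by Pieri applied to $s_\nu h_k$), which is Theorem \ref{th:coprodst}.

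The generating-function bookkeeping and the coalgebra-morphism check are routine; the real difficulty is the stable evaluation in the last paragraph. Verifying that the large top row is forced to be all $1$'s, that this uses precisely the right number of ones so that exactly $k$ remain, and that the leftover count collapses cleanly to a horizontal-strip-times-Littlewood--Richardson number all depend on a careful treatment of the reading-word conventions and of how large $r,t$ must be; this is the step most likely to conceal an off-by-one error or an unjustified appeal to stability, so I would invest the bulk of the rigorous argument there.
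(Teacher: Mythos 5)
Your proposal is correct, and for the $\bfpb$, $\hht$, $\xt$ and $\bfp$ bases it runs in the opposite logical direction from the paper. The paper first establishes $\Delta(\hht_\la)$ (Proposition \ref{prop:coprodht}) by evaluating $\hht_\la[\Xi_\mu+\Xi_\nu]=\left<h_{(n-|\la|,\la)},p_\mu p_\nu\right>$ and invoking Corollary \ref{cor:evalsf}, then transfers this to $\bfpb$ through the change of basis (Proposition \ref{prop:coprodbfpb}), and finally extracts $\Delta(\bfp_{i^r})$ by a four-fold alternating binomial sum. You instead obtain $\Delta(\bfpb_{i^r})$ immediately from the primitivity of $q_i:=\frac1i\sum_{d\mid i}\mu(i/d)p_d$ together with Vandermonde's identity for falling factorials, and then push $\hht$, $\xt$ and $\bfp$ through the coalgebra isomorphism $p_\ga\mapsto\bfpb_\ga$ and the generating function $e^{-t}(1+it)^{q_i}$. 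This buys two things: the second bullet needs no character evaluations or appeal to Corollary \ref{cor:evalsf} at all, and the $e^{t}\,(F(t)\otimes F(t))$ computation replaces the paper's lengthy binomial manipulation in Proposition \ref{prop:coprodbfp}; the underlying change-of-basis principle is the same one the paper uses, just applied in the direction $\bfpb\Rightarrow\hht,\xt$ rather than $\hht\Rightarrow\bfpb$. For Theorem \ref{th:coprodst} your strategy coincides with the paper's (evaluate at $\Xi_\rho+\Xi_\sigma$, use stability, reduce to Littlewood--Richardson data and a Pieri condition), but where you propose a hands-on lattice-word analysis of the long first row --- the step you rightly flag as fragile --- the paper simply factors the skew Schur function of the disconnected shape, $s_{(n-|\la|,\la)/(r-|\mu|,\mu)}=s_{(n-|\la|-r+|\mu|)}\cdot s_{\la/\mu}$, and applies Pieri; adopting that factorization removes every reading-word and off-by-one concern and makes your final identity $a_{\mu\nu}=\left<s_\mu s_\nu h_k,s_\la\right>$ a short computation. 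Note also that your ``comparison identifies $a_{\mu\nu}$'' step is, made precise, two successive applications of Corollary \ref{cor:evalsf} (once in each tensor factor), exactly as at the end of the paper's proof of Proposition \ref{prop:coprodht}, so you should cite it there.
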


\subsection{Coproducts on the power sum bases}
In \cite{OZ} we derived an analog of the Murnaghan-Nakayama rule, see Theorem 20.   This allows us to expand
the power symmetric function in the irreducible character basis.  This is currently  the most efficient method for
computing the irreducible character basis.

For a partition $\lambda$, it follows from the defining relations in
Equation \eqref{eq:pexpansionofcharbases} that
\begin{equation}
\label{eq:barpishtanalogue}
\bfpb_\lambda =
\sum_{\mu \vdash |\la|}
\left< p_\lambda, s_\mu \right> \xt_\mu
= \sum_{\mu \vdash |\la|}
\left< p_\lambda, m_\mu \right> \hht_\mu
~.
\end{equation}

Now this power sum analogue is important because of the following proposition.
\begin{prop}
If $|\mu|<|\la|$, then $\bfpb_\la[\Xi_\mu] = 0$ and if
$|\mu| = |\la|$, then $\bfpb_\la[\Xi_\mu] = \delta_{\la\mu}$.
More generally, $\phi_m(\bfpb_\lambda) = h_{(m - |\lambda|)} p_\lambda$.
\end{prop}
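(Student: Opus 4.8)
The plan is to prove the general identity $\phi_m(\bfpb_\lambda)=h_{(m-|\lambda|)}p_\lambda$ first, since it secretly encodes the evaluations $\bfpb_\lambda[\Xi_\mu]$ at \emph{every} $\mu$ (not merely those with $|\mu|\le|\lambda|$), and then to read off the two special cases as coefficient extractions.

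First I would compute $\phi_m(\bfpb_\lambda)$ using the expansion $\bfpb_\lambda=\sum_{\nu\vdash|\lambda|}\left<p_\lambda,s_\nu\right>\xt_\nu$ from Equation \eqref{eq:barpishtanalogue}. Since $\phi_m$ is linear (it is a linear combination of the evaluation maps $f\mapsto f[\Xi_\nu]$), and $\phi_m(\xt_\nu)=s_{(m-|\nu|)}s_\nu$ with every $\nu$ satisfying $|\nu|=|\lambda|$, this gives
$$\phi_m(\bfpb_\lambda)=\sum_{\nu\vdash|\lambda|}\left<p_\lambda,s_\nu\right>s_{(m-|\lambda|)}s_\nu=s_{(m-|\lambda|)}\sum_{\nu}\left<p_\lambda,s_\nu\right>s_\nu.$$
By the dual-basis identity \eqref{eq:dualexpansion} applied to the self-dual Schur basis, $\sum_\nu\left<p_\lambda,s_\nu\right>s_\nu=p_\lambda$, and $s_{(m-|\lambda|)}=h_{m-|\lambda|}$, yielding $\phi_m(\bfpb_\lambda)=h_{(m-|\lambda|)}p_\lambda$. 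I would stress that the relation $\phi_m(\xt_\nu)=s_{(m-|\nu|)}s_\nu$ holds uniformly for all $m\ge 0$, both sides vanishing when $m<|\nu|$ under the convention $s_{(k)}=0$ for $k<0$, so this single computation handles every $m$ at once, with the right-hand side equal to $0$ precisely when $m<|\lambda|$.

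Next I would recover the pointwise evaluations. The conceptual linchpin is that the Frobenius image determines the evaluations through $f[\Xi_\mu]=\left<\phi_{|\mu|}(f),p_\mu\right>$: pairing $\phi_m(f)=\sum_{\nu\vdash m}f[\Xi_\nu]p_\nu/z_\nu$ with $p_\mu$ and using $\left<p_\nu,p_\mu\right>=z_\nu\delta_{\nu\mu}$ isolates $f[\Xi_\mu]$. Applying this to $\bfpb_\lambda$ gives $\bfpb_\lambda[\Xi_\mu]=\left<h_{(|\mu|-|\lambda|)}p_\lambda,p_\mu\right>$. When $|\mu|<|\lambda|$ the factor $h_{|\mu|-|\lambda|}$ is $0$, so the evaluation vanishes; when $|\mu|=|\lambda|$ it reduces to $\left<p_\lambda,p_\mu\right>=z_\lambda\delta_{\lambda\mu}$, which is supported on $\lambda=\mu$ as asserted.

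I do not anticipate a serious obstacle: the argument is essentially mechanical once the $\xt$-expansion and the Frobenius image of $\xt_\nu$ are available. The two points requiring care are (i) observing that $\bfpb_\lambda$ has degree exactly $|\lambda|$ with leading term $p_\lambda$—visible from the falling-factorial definition, since $\bfpb_{i^r}$ has top term $p_i^r$—so that the inhomogeneity is controlled and $\phi_m$ behaves as expected; and (ii) the uniform treatment of the range of $m$, where the convention $h_k=s_{(k)}=0$ for $k<0$ lets the general formula and both special cases emerge from one calculation. A purely direct alternative—expanding $\bfpb_{i^r}[\Xi_\mu]$ from the definition using $p_d[\Xi_k]=k$ when $k\mid d$ (and $0$ otherwise) together with the falling-factorial structure—would also succeed but is more computational, so I would keep the $\xt$-route as the main line.
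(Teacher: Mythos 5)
Your proof is correct and follows essentially the same route as the paper's: both expand $\bfpb_\la$ via Equation \eqref{eq:barpishtanalogue}, apply $\phi_m$ term by term, and collapse the resulting sum with a dual-basis identity. The paper runs the computation through the $\hht_\mu$ expansion and $h$--$m$ duality, where you use the $\xt_\nu$ expansion and Schur self-duality; the two are interchangeable. You go slightly further than the paper in making explicit how the two pointwise claims follow from the $\phi_m$ formula via $f[\Xi_\mu]=\left<\phi_{|\mu|}(f),p_\mu\right>$, a step the paper leaves implicit. One point you should not gloss over: your computation at $|\mu|=|\la|$ gives $\left<p_\la,p_\mu\right>=z_\la\delta_{\la\mu}$, not $\delta_{\la\mu}$, and your value is the correct one --- it is forced by the general formula, since setting $m=|\la|$ gives $\phi_{|\la|}(\bfpb_\la)=p_\la$, whose coefficient of $p_\mu/z_\mu$ is $z_\la\delta_{\la\mu}$, and a direct check with $\bfpb_{(2)}=p_2-p_1$ and $\Xi_{(2)}=\{1,-1\}$ gives $\bfpb_{(2)}[\Xi_{(2)}]=2=z_{(2)}$. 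So the $\delta_{\la\mu}$ in the statement is missing a factor of $z_\la$, and the hedge ``supported on $\la=\mu$ as asserted'' papers over a genuine (if minor) discrepancy that you had in fact resolved correctly.
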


\begin{proof}
We know that $\hht_\lambda[\Xi_\gamma] =
\left< h_{(|\gamma|-|\mu|)} h_\mu, p_\gamma\right>$ (and in
particular the expression is $0$ if $|\gamma|<|\mu|$) by Equation $(6)$ of \cite{OZ}.
By Equation \eqref{eq:barpishtanalogue},
\begin{align*}
\phi_m(\bfpb_\lambda) &= \sum_{\gamma \vdash m}\sum_{\mu \vdash |\la|}
\left< p_\lambda, m_\mu \right> \hht_\mu[\Xi_\gamma] \frac{p_\gamma}{z_\gamma}\\
&= \sum_{\gamma \vdash m}\sum_{\mu \vdash |\la|}
\left< p_\lambda, m_\mu \right> \left< h_{(m-|\mu|)} h_\mu, p_\gamma\right> \frac{p_\gamma}{z_\gamma}
=\sum_{\gamma \vdash m}\left< h_{(m-|\mu|)} p_\lambda, p_\gamma\right> \frac{p_\gamma}{z_\gamma}
\end{align*}
by an application of Equation \eqref{eq:dualexpansion} and the
right hand side of this expression is equal to $h_{(m-|\mu|)} p_\lambda$ by a
second application of Equation \eqref{eq:dualexpansion}.
\end{proof}

We wish to understand as completely as possible the coproduct
structure on these power sum bases.  Since the coproduct is an algebra
homomorphism, it suffices to understand the result on the partitions
of the form $(i^r)$.

\begin{prop}\label{prop:coprodbfpb}  The $\bfpb_\lambda$ basis has the
same coproduct as the power sum basis.  That is,
if the coefficients $a^{\mu}_{\ga\tau}$ are defined
by those appearing in the equation $\Delta(p_\mu) = \sum_{\ga,\tau}
a^{\mu}_{\ga\tau} p_\ga \otimes p_\tau$ then
$\Delta(\bfpb_\mu) = \sum_{\ga,\tau}
a^{\mu}_{\ga\tau} \bfpb_\ga \otimes \bfpb_\tau$
and specifically
\begin{equation}\label{eq:deltabfpb}
\Delta( \bfpb_{i^r} ) = \sum_{k=0}^r \binom{r}{k}
\bfpb_{i^k} \otimes
\bfpb_{i^{r-k}}~.
\end{equation}
\end{prop}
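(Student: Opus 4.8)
The plan is to reduce the computation to a single primitive generator and then invoke the Vandermonde--Chu identity for falling factorials. First I would set $u_i := \frac1i \sum_{d \mid i} \mu(i/d)\, p_d$, so that the defining Equation \eqref{eq:powerrects} reads $\bfpb_{i^r} = i^r (u_i)_r$, where $(x)_r$ is the $r$-th falling factorial. The crucial observation is that $u_i$ is a $\QQ$-linear combination of the power sums $p_d$, each of which is primitive; hence $u_i$ is itself primitive. Working under the identification $Sym \otimes Sym \simeq Sym_{X,Y}$ from Section \ref{sec:symfunc}, where $\Delta(f)$ corresponds to $f[X+Y]$, primitivity gives $u_i[X+Y] = u_i[X] + u_i[Y]$.

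With this in hand I would compute directly
$$\bfpb_{i^r}[X+Y] = i^r\bigl(u_i[X+Y]\bigr)_r = i^r\bigl(u_i[X] + u_i[Y]\bigr)_r.$$
The key step is the Vandermonde--Chu identity for falling factorials, $(a+b)_r = \sum_{k=0}^r \binom rk (a)_k (b)_{r-k}$, applied with $a = u_i[X]$ and $b = u_i[Y]$. Splitting $i^r = i^k\, i^{r-k}$ then regroups the sum as
$$\bfpb_{i^r}[X+Y] = \sum_{k=0}^r \binom rk \bigl(i^k (u_i[X])_k\bigr)\bigl(i^{r-k}(u_i[Y])_{r-k}\bigr) = \sum_{k=0}^r \binom rk \bfpb_{i^k}[X]\,\bfpb_{i^{r-k}}[Y],$$
which, translated back through the Hopf-algebra dictionary, is exactly Equation \eqref{eq:deltabfpb}. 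Note the endpoints $k=0,r$ produce $\bfpb_{i^0} = 1$, matching the expected $\bfpb_{i^r}\otimes 1 + 1\otimes\bfpb_{i^r}$ boundary terms.

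For the general claim that the $\bfpb_\lambda$ basis shares the coproduct coefficients of the power sum basis, I would argue by multiplicativity. Since $\bfpb_\mu = \prod_{i} \bfpb_{i^{m_i(\mu)}}$ and $\Delta$ is an algebra homomorphism, $\Delta(\bfpb_\mu) = \prod_i \Delta(\bfpb_{i^{m_i(\mu)}})$. Comparing Equation \eqref{eq:deltabfpb} with $\Delta(p_{i^r}) = (p_i \otimes 1 + 1 \otimes p_i)^r = \sum_{k=0}^r \binom rk p_i^k \otimes p_i^{r-k}$, the two have identical binomial structure part-by-part, so expanding both products yields the same coefficients $a^\mu_{\ga\tau}$ with every $p$ replaced by a $\bfpb$.

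The main thing to get right --- more a point of care than a genuine obstacle --- is the justification that applying the algebra homomorphism $\Delta$ to the polynomial $(u_i)_r$ in the primitive element $u_i$ produces the corresponding polynomial $(u_i \otimes 1 + 1 \otimes u_i)_r$, after which Vandermonde--Chu does all the combinatorial work. Because the entire argument lives on the single primitive generator $u_i$, no appeal to the character evaluations or to Corollary \ref{cor:evalsf} is required.
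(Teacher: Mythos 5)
Your proof is correct, but it takes a genuinely different route from the paper's. The paper transports the coproduct structure through the linear isomorphism $h_\la \mapsto \hht_\la$ (equivalently $p_\ga \mapsto \bfpb_\ga$): it expands $\bfpb_\la = \sum_\nu \left< p_\la, m_\nu\right> \hht_\nu$, invokes the fact --- proved only later, in Proposition \ref{prop:coprodht}, via character evaluations and Corollary \ref{cor:evalsf} --- that $\Delta(\hht_\la)$ has the same structure constants as $\Delta(h_\la)$, and then converts back to conclude that the coefficient of $\bfpb_\ga \otimes \bfpb_\tau$ in $\Delta(\bfpb_\la)$ equals that of $p_\ga \otimes p_\tau$ in $\Delta(p_\la)$. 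You instead work directly on the generator: writing $\bfpb_{i^r} = i^r (u_i)_r$ with $u_i$ a $\QQ$-linear combination of the primitive elements $p_d$, hence itself primitive, and then applying the Chu--Vandermonde identity $(a+b)_r = \sum_{k=0}^r \binom rk (a)_k (b)_{r-k}$, which is a polynomial identity and therefore valid for the commuting elements $u_i[X]$, $u_i[Y]$ of $Sym_{X,Y}$. This is more elementary and self-contained: it needs no character evaluations, no appeal to Corollary \ref{cor:evalsf}, and no forward reference to Proposition \ref{prop:coprodht}, and it makes transparent \emph{why} the binomial coefficients appear (they are the Vandermonde coefficients for a falling factorial of a sum of primitives). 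What the paper's route buys is uniformity: the same transport-of-structure argument is recycled essentially verbatim to deduce Corollary \ref{prop:coprodxt} for the $\xt_\la$ basis from the Schur basis, whereas your argument is specific to the power-sum analogue. Your concluding multiplicativity step --- expanding $\Delta(\bfpb_\mu) = \prod_i \Delta(\bfpb_{i^{m_i(\mu)}})$ and matching coefficients part-size by part-size against $\prod_i \Delta(p_{i^{m_i(\mu)}})$ --- is exactly the right way to pass from Equation \eqref{eq:deltabfpb} to the general claim about the coefficients $a^\mu_{\ga\tau}$, since products of $\bfpb_{i^{k_i}}$ over distinct part sizes $i$ assemble into $\bfpb_\gamma$ by definition, just as the $p_{i^{k_i}}$ assemble into $p_\gamma$.
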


\begin{proof}
Define the coefficients $b_{\al\be}^\la$ appearing in the
coproduct of the
complete symmetric function basis by
\begin{equation}\nonumber
\Delta(h_\la) = \sum_{\al, \be} b^{\la}_{\al\be}
h_\al \otimes h_\be~.
\end{equation}
In Proposition \ref{prop:coprodht}, we will show by direct computation that
$\Delta(\hht_\la) = \sum_{\al, \be} b^{\la}_{\al\be}
\hht_\al \otimes \hht_\be$.
We know already that the coefficient of $p_\ga$
in $h_\lambda$ is equal to the coefficient of
$\bfpb_\ga$ in $\hht_\la$ (and both are
equal to $\left< h_\la, p_\ga/z_\ga \right>$ by Equation \eqref{eq:pexpansionofcharbases}).
Therefore the coefficient of
$\bfpb_\ga \otimes \bfpb_\tau$ in $\Delta(\bfpb_\lambda)$
is equal to
\begin{align*}
\Delta(\bfpb_\lambda) &=
\sum_{\nu} \left< p_\la, m_\nu \right> \Delta(\hht_\nu)
= \sum_{\nu} \sum_{\alpha,\beta}
\left< p_\la, m_\nu \right> b_{\al\be}^\nu
\hht_\al \otimes \hht_\be \\
&= \sum_{\nu} \sum_{\alpha,\beta} \sum_{\ga,\tau}
\left< p_\la, m_\nu \right> b_{\al\be}^\nu
\left< h_\al, p_\ga \right>
\left< h_\be, p_\tau \right>
\frac{\bfpb_\ga}{z_\ga} \otimes \frac{\bfpb_\tau}{z_\tau}~.
\end{align*}
Now if we compute the coefficient of $p_\ga \otimes p_\tau$
in $\Delta(p_\lambda)$ it is precisely the same as the
coefficient of $\bfpb_\ga \otimes \bfpb_\tau$
in $\Delta(\bfpb_\la)$.

In particular, Equation \eqref{eq:deltabfpb} holds because
\begin{equation*}
\Delta(p_{i^r}) = \left( p_i \otimes 1 +
1 \otimes p_i\right)^r = \sum_{k=0}^r
\binom{r}{k}
p_{i^k} \otimes p_{i^{r-k}}~.\qedhere
\end{equation*}
\end{proof}

Now for the coproduct on the other power basis $\bfp_\lambda$ we
need to work slightly harder to give the expression.

\begin{prop} \label{prop:coprodbfp}
For $i,r \geq 1$, the coproduct on $\bfp_\lambda$
can be calculated by
\begin{equation}
\Delta(\bfp_{i^r})
= \sum_{d=0}^r \binom{r}{d}
\sum_{a=0}^d \binom{d}{a} \bfp_{i^a} \otimes \bfp_{i^{d-a}},
\end{equation}
and since $\Delta$ is an algebra homomorphism, $\Delta(\bfp_{\gamma})
:= \prod_{i \geq 1} \Delta(\bfp_{i^{m_i(\gamma)}})$.
\end{prop}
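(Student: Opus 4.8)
The plan is to leverage the coproduct formula for $\bfpb$ already established in Proposition~\ref{prop:coprodbfpb}, exploiting the fact that $\bfp_{i^\bullet}$ and $\bfpb_{i^\bullet}$ are related by mutually inverse binomial transforms. Recall from \eqref{eq:powerrects} that $\bfp_{i^r} = \sum_{k=0}^r (-1)^{r-k}\binom{r}{k}\bfpb_{i^k}$, while conversely $\bfpb_{i^r} = \sum_{k=0}^r \binom{r}{k}\bfp_{i^k}$. Since $\Delta$ is linear, the first relation gives $\Delta(\bfp_{i^r}) = \sum_{k=0}^r (-1)^{r-k}\binom{r}{k}\Delta(\bfpb_{i^k})$, and Proposition~\ref{prop:coprodbfpb} then replaces each $\Delta(\bfpb_{i^k})$ by $\sum_j \binom{k}{j}\bfpb_{i^j}\otimes\bfpb_{i^{k-j}}$; expanding every factor back into the $\bfp$ basis via the second relation expresses $\Delta(\bfp_{i^r})$ entirely in the $\bfp\otimes\bfp$ basis. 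Collecting the coefficient of $\bfp_{i^a}\otimes\bfp_{i^b}$ directly, however, produces a four-fold binomial sum, so I would instead package the bookkeeping in exponential generating functions.

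Concretely, I would set $P(t) = \sum_{r\geq 0}\bfp_{i^r}\,t^r/r!$ and $\overline{P}(t) = \sum_{r\geq 0}\bfpb_{i^r}\,t^r/r!$. The binomial transform then reads simply $\overline{P}(t) = e^{t}P(t)$, and Proposition~\ref{prop:coprodbfpb} becomes exactly the group-like statement $\Delta\,\overline{P}(t) = \overline{P}(t)\otimes\overline{P}(t)$, where the variable $t$ is shared across both tensor factors. Because $e^{\pm t}$ are scalar series and $\Delta$ is linear, applying $\Delta$ coefficientwise yields
\[
\Delta\,P(t) = e^{-t}\,\Delta\,\overline{P}(t) = e^{-t}\,\overline{P}(t)\otimes\overline{P}(t) = e^{t}\,P(t)\otimes P(t),
\]
the last equality using $\overline{P}(t) = e^{t}P(t)$ in each tensor factor, so that the two factors of $e^{t}$ combine with the $e^{-t}$ to leave a single $e^{t}$.

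Finally I would read off the coefficient of $t^r/r!$. Expanding $e^{t}\,P(t)\otimes P(t)$ gives $\sum_{a+b+c=r}\binom{r}{a,b,c}\bfp_{i^a}\otimes\bfp_{i^b}$, and grouping these terms by $d := a+b$ (so $c = r-d$) together with the identity $\binom{r}{a,b,c} = \binom{r}{d}\binom{d}{a}$ reproduces the claimed formula $\sum_{d=0}^r\binom{r}{d}\sum_{a=0}^d\binom{d}{a}\bfp_{i^a}\otimes\bfp_{i^{d-a}}$. The passage to arbitrary $\gamma$ is then immediate from the multiplicativity of $\Delta$, as already stated in the proposition. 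The only genuinely delicate point is the step identifying Proposition~\ref{prop:coprodbfpb} with group-likeness of $\overline{P}(t)$; once that reformulation is in place the result is forced by scalar algebra, and it is precisely this device that circumvents the unwieldy multiple binomial sum encountered on the direct route.
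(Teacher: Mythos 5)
Your proof is correct. Both you and the paper take the same strategic route --- pull $\Delta(\bfp_{i^r})$ back through the inverse binomial transforms to Proposition~\ref{prop:coprodbfpb} --- but you diverge at the computational core. The paper bites the bullet on the four-fold binomial sum, reordering summations and invoking two ad hoc identities (coefficient extractions from $(1+1+y)^k$ and $(2-1+y)^r$) to collapse it to $\binom{r}{d}\binom{d}{a}$. You instead encode everything in the exponential generating functions $P(t)$ and $\overline{P}(t)$, where the binomial transform becomes $\overline{P}(t)=e^tP(t)$ and Proposition~\ref{prop:coprodbfpb} becomes $\Delta\,\overline{P}(t)=\overline{P}(t)\otimes\overline{P}(t)$ (with $t$ shared across the tensor factors --- a mild but legitimate reinterpretation, as you note); the identity $\Delta\,P(t)=e^tP(t)\otimes P(t)$ then falls out of scalar algebra, and extracting the coefficient of $t^r/r!$ via $\binom{r}{a,b,c}=\binom{r}{d}\binom{d}{a}$ with $d=a+b$ gives the stated formula. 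Your version buys transparency: the single factor of $e^t$ surviving in $\Delta\,P(t)$ \emph{explains} why the answer is a trinomial sum rather than the "group-like" binomial sum one gets for $\bfpb$, whereas in the paper's calculation that structure only emerges at the very end. The paper's version has the advantage of staying entirely within finite sums and the identities it has already set up, with no need to justify formal power series manipulations. Both are complete and correct.
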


\begin{proof}
Applying Equation \eqref{eq:powerrects} and \eqref{eq:deltabfpb},
we simplify the limits
of the expression to show that
$\binom{r}{k}=0$ if $k>r$.  Then we see that
\begin{align*}
\Delta(\bfp_{i^r}) &= \sum_{k\geq0} (-1)^{r-k} \binom{r}{k}
\Delta(\bfpb_{i^k})\\
&=\sum_{k\geq0} \sum_{\ell\geq0} (-1)^{r-k} \binom{r}{k}
\binom{k}{\ell} \bfpb_{i^\ell} \otimes \bfpb_{i^{k-\ell}}\\
&=\sum_{k\geq0} \sum_{\ell\geq0}
\sum_{a\geq0} \sum_{b\geq 0}
(-1)^{r-k} \binom{r}{k}
\binom{k}{\ell} \binom{\ell}{a} \binom{k-\ell}{b}
\bfp_{i^a} \otimes \bfp_{i^{b}}\\
&=
\sum_{a\geq0} \sum_{b\geq 0}
\sum_{k\geq0} \sum_{\ell\geq0}
(-1)^{r-k} \binom{r}{k}
\binom{k}{\ell} \binom{\ell}{a} \binom{k-\ell}{b}
\bfp_{i^a} \otimes \bfp_{i^{b}}
\end{align*}
where in the last equality we have just rearranged the
order of the summations.
The sum over $a$ and $b$ can be combined by setting $d=a+b$
and changing the sum over $a \geq 0, b\geq 0$ to one of
$d\geq0, a\geq 0$.  Moreover, since $\binom{\ell}{a} = 0$ if $\ell<a$,
then we can assume that $\ell\geq a$ and hence
\begin{align}
\Delta(\bfp_{i^r})&=\sum_{d\geq0} \sum_{a=0}^d
\sum_{k\geq0} \sum_{\ell\geq a}
(-1)^{r-k} \binom{r}{k}
\binom{k}{\ell} \binom{\ell}{a} \binom{k-\ell}{d-a}
\bfp_{i^a} \otimes \bfp_{i^{d-a}}\nonumber\\
&=\sum_{d\geq0} \sum_{a=0}^d
\sum_{k\geq0} \sum_{\ell\geq 0}
(-1)^{r-k} \binom{r}{k}
\binom{k}{\ell+a} \binom{\ell+a}{a} \binom{k-\ell-a}{d-a}
\bfp_{i^a} \otimes \bfp_{i^{d-a}}~.
\label{eq:currentexpr}
\end{align}
Now by expanding the binomial coefficients we know that
$$\binom{k}{\ell+a} \binom{\ell+a}{a} \binom{k-\ell-a}{d-a} =
\binom{d}{a} \binom{k}{\ell+d} \binom{\ell+d}{d}~.$$
Note that by taking the coefficient of $y^{d}$ in
$$\sum_{m=0}^k 2^{k-m} \binom{k}{m}y^m =
(1+1+y)^k = \sum_{m\geq0}\sum_{\ell\geq0} \binom{k}{\ell}\binom{\ell}{m} y^m
= \sum_{m\geq0}\sum_{\ell'\geq0} \binom{k}{\ell'+m}\binom{\ell'+m}{m} y^m$$
we know that
$$
2^{k-d}\binom{k}{d}  = \sum_{\ell \geq 0} \binom{k}{\ell+d} \binom{\ell+d}{d}~.
$$
This reduces Equation  \eqref{eq:currentexpr} to
\begin{align*}
\Delta(\bfp_{i^r}) &=\sum_{d\geq0} \sum_{a=0}^d
\binom{d}{a}
\sum_{k\geq0}
(-1)^{r-k} 2^{k-d} \binom{r}{k} \binom{k}{d}
\bfp_{i^a} \otimes \bfp_{i^{d-a}}\nonumber\\
&=\sum_{d\geq0} \sum_{a=0}^d
\binom{d}{a}
\binom{r}{d}
\bfp_{i^a} \otimes \bfp_{i^{d-a}}~.
\end{align*}
The last equality follows by taking the
coefficient of $y^{d}$ in
$$\sum_{m \geq 0} \binom{r}{m} y^m = (2-1+y)^r =
\sum_{m \geq 0} \sum_{k \geq 0} (-1)^{r-k} 2^{k-m} \binom{r}{k} \binom{k}{m} y^m~.\qedhere$$
\end{proof}

\subsection{Coproducts on the induced trivial and induced irreducible character basis}
The induced trivial character basis follows the combinatorics of
multiset partitions for the product.   We now give the coproduct
for this basis.

It turns out that the coefficient of $\hht_\mu \otimes \hht_\nu$
in $\Delta(\hht_\la)$ is equal to the coefficient $h_\mu \otimes h_\nu$ in $\Delta(h_\la)$.
Since $\Delta(h_n) = \sum_{k=0}^n h_{n-k} \otimes h_k$, we have more generally that
\begin{equation}\nonumber
\Delta(h_\la) = \sum_{\alpha+\beta=\lambda} h_\alpha \otimes h_\beta~.
\end{equation}
Hence the coproduct formula for the induced trivial character basis
can be stated in the following proposition.
\begin{prop} \label{prop:coprodht} For a partition $\lambda$,
\begin{equation}
\Delta(\hht_\la) = \sum_{\alpha+\beta=\lambda} \hht_\alpha \otimes \hht_\beta
\end{equation}
where the sum is over all pairs of weak compositions $\alpha$ and $\beta$ of
length $\lambda$ whose vector sum is equal to $\lambda$.
\end{prop}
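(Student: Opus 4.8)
The plan is to prove the identity by evaluating both sides at eigenvalues and invoking a two-alphabet version of Corollary \ref{cor:evalsf}. Under the isomorphism $Sym \otimes Sym \simeq Sym_{X,Y}$, the coproduct $\Delta(\hht_\la)$ corresponds to the series $\hht_\la[X+Y]$, while $\hht_\al \otimes \hht_\be$ corresponds to $\hht_\al[X]\hht_\be[Y]$. As explained at the start of Section \ref{sec:coprods}, substituting $X = \Xi_\mu$ and $Y = \Xi_\nu$ for partitions $\mu$ and $\nu$ is exactly the evaluation attached to restriction to $S_{|\mu|}\times S_{|\nu|}$; since the eigenvalues of a block-diagonal permutation matrix are the union of the eigenvalues of its blocks, this substitution sends $\hht_\la[X+Y]$ to $\hht_\la[\Xi_{\mu\cup\nu}]$, where $\mu\cup\nu$ denotes the partition formed from all the parts of $\mu$ and $\nu$ together.

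First I would compute the left-hand side. Writing $m = |\mu|+|\nu|$ and using $\hht_\la[\Xi_\ga] = \left< h_{(|\ga|-|\la|)}h_\la, p_\ga\right>$ together with $p_{\mu\cup\nu} = p_\mu p_\nu$, the left side evaluates to $\left< h_{(m-|\la|)}h_\la, p_\mu p_\nu\right>$. Because the product and coproduct of $Sym$ are adjoint under the Hall inner product, this equals $\left< \Delta(h_{(m-|\la|)}h_\la), p_\mu\otimes p_\nu\right>$. Since $\Delta$ is an algebra homomorphism with $\Delta(h_n) = \sum_{k=0}^n h_{n-k}\otimes h_k$ and $\Delta(h_\la) = \sum_{\al+\be=\la} h_\al\otimes h_\be$, I would expand the product $\Delta(h_{(m-|\la|)})\,\Delta(h_\la)$ and pair it against $p_\mu\otimes p_\nu$.

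The degree constraints then do the bookkeeping: a term $h_{(m-|\la|-k)}h_\al \otimes h_k h_\be$ pairs nontrivially with $p_\mu\otimes p_\nu$ only when $m-|\la|-k+|\al| = |\mu|$ and $k+|\be| = |\nu|$, and for each weak composition pair $(\al,\be)$ with $\al+\be=\la$ these two conditions force the single value $k = |\nu|-|\be|$. Hence the left side collapses to $\sum_{\al+\be=\la}\left< h_{(|\mu|-|\al|)}h_\al, p_\mu\right>\,\left< h_{(|\nu|-|\be|)}h_\be, p_\nu\right> = \sum_{\al+\be=\la}\hht_\al[\Xi_\mu]\hht_\be[\Xi_\nu]$, which is precisely the evaluation of the right-hand side. (Here $\hht_\al$ for a weak composition $\al$ means $\hht$ indexed by the partition obtained by sorting the parts of $\al$ and discarding zeros, consistent with $h_\al$ depending only on that partition.)

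Finally I would upgrade this equality of evaluations to an identity in $Sym\otimes Sym$. Writing the difference $H$ of the two sides as $\sum_k u_k\otimes v_k$ with the $v_k$ linearly independent, the vanishing $\sum_k u_k[\Xi_\mu]\,v_k[\Xi_\nu] = 0$ for all $\nu$ forces $\sum_k u_k[\Xi_\mu]\,v_k = 0$ in $Sym$ by Corollary \ref{cor:evalsf}; independence of the $v_k$ then gives $u_k[\Xi_\mu]=0$ for all $\mu$, so each $u_k=0$ by a second application, whence $H=0$. I expect the only genuinely delicate point to be this two-alphabet separation argument; the product--coproduct adjunction and the degree bookkeeping are routine. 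Once established, this also supplies the \emph{direct computation} invoked in the proof of Proposition \ref{prop:coprodbfpb}.
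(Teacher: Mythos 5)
Your proposal is correct and follows essentially the same route as the paper's proof: both evaluate at $\Xi_\mu$ and $\Xi_\nu$, use $\hht_\la[\Xi_\ga]=\left<h_{(|\ga|-|\la|)}h_\la,p_\ga\right>$ together with the product--coproduct adjunction and the coproduct of the $h$'s, let degree constraints force $k=|\nu|-|\be|$, and then apply Corollary \ref{cor:evalsf} twice to pass from evaluations to an identity in $Sym\otimes Sym$. The only differences are cosmetic: you run the computation from the left-hand side rather than the right, and your final two-alphabet separation step (writing the difference as $\sum_k u_k\otimes v_k$ with independent $v_k$) spells out more explicitly what the paper compresses into ``a second application'' of the corollary.
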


\begin{proof}
We will show that $\hht_\lambda[X+Y] = \sum_{\alpha+\beta=\lambda} \hht_\alpha[X] \hht_\beta[Y]$.
This will be done by applying Proposition \ref{prop:rootsimplsf} that states
if $f[\Xi_\mu] = g[\Xi_\mu]$ for enough partitions $\mu$, then $f=g$ as symmetric functions.

We note that for partitions $\mu$ and $\nu$ and a positive integer $n = |\mu|+|\nu|$,
\begin{equation}\nonumber
\hht_\lambda[\Xi_\mu + \Xi_\nu] = \left< h_{(n-|\la|,\la)}, p_\mu p_\nu \right>~.
\end{equation}
Now the evaluation of the,
\begin{align}
\sum_{\alpha+\beta = \lambda} \hht_\alpha[\Xi_\mu] \hht_\beta[ \Xi_\nu] &=
\sum_{\alpha+\beta = \lambda} \left< h_{(|\mu|-|\al|,\al)}, p_\mu \right>
\left<h_{(|\nu|-|\be|,\be)}, p_\nu \right>\nonumber\\
&= \sum_{\alpha+\beta = \lambda}
\sum_{k=0}^{n-|\la|}
\left< h_{(n-|\la|-k,\alpha)}, p_\mu \right>
\left< h_{(k,\be)}, p_\nu \right> \label{eq:coprodready}
\end{align}
where in the last expression all of the terms in sum are assumed
to be $0$ unless $k = |\nu|-|\be|$.  In this case,
$n-k-|\la| = |\mu|+|\nu|-(|\nu|-|\be|) -|\la| = |\mu|-|\al|$.
Now we can recognize the terms in the left entry of the scalar product
as those that arise as the coproduct formula on
the complete basis element $h_{(n-|\la|,\lambda)}$.  If we define
$\left<f \otimes f', g \otimes g' \right> = \left< f, g \right>
\left< f', g' \right>$ then we know (see for instance \cite{Mac}
I.5 p.92 example 25) that
$\left< \Delta(f), g \otimes h \right> = \left< f, gh \right>$.
Hence we have that Equation \eqref{eq:coprodready} is equivalent to
\begin{align*}
\sum_{\alpha+\beta = \lambda} \hht_\alpha[\Xi_\mu] \hht_\beta[ \Xi_\nu]
&= \left< \Delta(h_{(n-|\la|,\lambda)}), p_\nu \otimes p_\mu \right>\nonumber\\
&= \left< h_{(n-|\la|,\lambda)}, p_\nu p_\mu \right>\\
&= \hht_\lambda[\Xi_\mu + \Xi_\nu]~.\nonumber
\end{align*}
Now from Proposition \ref{prop:rootsimplsf} we can conclude
that $\hht_\lambda[X + \Xi_\nu] =
\sum_{\alpha+\beta = \lambda} \hht_\alpha[X] \hht_\beta[ \Xi_\nu]$
as a symmetric function identity and
a second application allows us to conclude that
$\hht_\lambda[X + Y] =
\sum_{\alpha+\beta = \lambda} \hht_\alpha[X] \hht_\beta[Y]$.
\end{proof}

Because the relationship between the $\hht_\lambda$ basis and the $\xt_\lambda$ basis
is the same as the relationship between the $h_\lambda$ and $s_\lambda$ basis,
we can conclude that the coproduct rule on the
induced irreducible character basis will be the same as that for the Schur
basis.

\begin{cor}\label{prop:coprodxt} For a partition $\lambda$,
\begin{equation}
\Delta(\xt_\la) = \sum_{\mu,\nu} c_{\mu\nu}^\lambda \xt_\mu \otimes \xt_\nu
\end{equation}
where the coefficients $c_{\mu\nu}^\lambda$ are the Littlewood-Richardson coefficients,
the same coproduct structure coefficients for the Schur basis.
\end{cor}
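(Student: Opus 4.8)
The plan is to exploit the structural parallel, explicitly flagged in the text preceding the corollary, between the pair $(\hht_\lambda, \xt_\lambda)$ and the pair $(h_\lambda, s_\lambda)$. Recall from the excerpt that $\hht_\mu = \sum_{\lambda\vdash|\mu|} K_{\lambda\mu} \xt_\lambda$, which is precisely the transition $h_\mu = \sum_\lambda K_{\lambda\mu} s_\lambda$ between the complete and Schur bases under the same Kostka matrix. Since this transition matrix is unitriangular with respect to dominance order and independent of which pair we use, the change of basis expressing $\xt_\lambda$ in terms of the $\hht_\nu$ is identical to the one expressing $s_\lambda$ in terms of the $h_\nu$. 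I would begin by recording this: there exist coefficients $K^{-1}_{\lambda\nu}$ (the inverse Kostka numbers) such that $\xt_\lambda = \sum_\nu K^{-1}_{\lambda\nu}\,\hht_\nu$ and likewise $s_\lambda = \sum_\nu K^{-1}_{\lambda\nu}\,h_\nu$.

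Next I would transport the coproduct computation through this change of basis, mimicking verbatim the argument of Proposition \ref{prop:coprodbfpb}. In that proof the authors established that, because $\bfpb_\lambda$ and $\hht_\lambda$ are related by the same matrix that relates $p_\lambda$ and $h_\lambda$ (the coefficient of $p_\gamma$ in $h_\lambda$ equals the coefficient of $\bfpb_\gamma$ in $\hht_\lambda$), the coproduct on $\bfpb$ inherits the power-sum coproduct. The identical machinery applies here. Using $\Delta(\hht_\nu) = \sum_{\alpha+\beta=\nu} \hht_\alpha \otimes \hht_\beta$ from Proposition \ref{prop:coprodht}, I would compute
\begin{equation*}
\Delta(\xt_\lambda) = \sum_\nu K^{-1}_{\lambda\nu}\,\Delta(\hht_\nu)
= \sum_\nu K^{-1}_{\lambda\nu} \sum_{\alpha+\beta=\nu} \hht_\alpha \otimes \hht_\beta,
\end{equation*}
then re-express each $\hht_\alpha \otimes \hht_\beta$ back in the $\xt$-basis using the Kostka matrix. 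The key point is that this entire string of manipulations is formally identical — term by term, under the dictionary $\hht \leftrightarrow h$ and $\xt \leftrightarrow s$ — to the corresponding computation of $\Delta(s_\lambda)$ starting from $\Delta(h_n) = \sum_{k} h_{n-k}\otimes h_k$. Since the latter is well known to yield $\Delta(s_\lambda) = \sum_{\mu,\nu} c^\lambda_{\mu\nu}\, s_\mu \otimes s_\nu$ with Littlewood--Richardson coefficients, the same coefficients must appear for $\xt$.

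Rather than redo the bookkeeping, I would simply invoke this abstract transfer principle: any Hopf-algebraic identity among $\{h_\lambda\}$ and $\{s_\lambda\}$ that is expressed purely through the Kostka change of basis and the complete-basis coproduct holds equally for $\{\hht_\lambda\}$ and $\{\xt_\lambda\}$, because Proposition \ref{prop:coprodht} shows $\Delta$ acts on $\hht$ exactly as it acts on $h$. This is the content of the parenthetical \emph{mutatis mutandis} remark in the text, and the corollary follows at once. The main obstacle to watch for is purely a matter of rigor in the transfer: one must verify that the isomorphism $\Delta \mapsto \Delta$ respects not just the linear coproduct structure on the distinguished generators but also that the $\xt$ basis is genuinely dual/triangular in the same way $s$ is, so that the inverse Kostka expansion is legitimate. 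Since $\xt_\lambda = s_\lambda + (\text{lower degree terms})$ and the Kostka matrix is invertible, this triangularity is guaranteed, so the transfer is clean and no genuine difficulty arises beyond confirming that every step of the $s$-basis computation uses only the two ingredients — the Kostka transition and the $h$-coproduct — that carry over identically.
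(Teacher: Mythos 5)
Your proposal is correct and follows essentially the same route as the paper: the authors likewise observe that $\hht_\lambda$ and $\xt_\lambda$ are related by the same Kostka transition as $h_\lambda$ and $s_\lambda$, and then deduce the Littlewood--Richardson coproduct \emph{mutatis mutandis} from Proposition \ref{prop:coprodht} via the transfer argument of Proposition \ref{prop:coprodbfpb}. Your added remark on the legitimacy of the inverse-Kostka expansion (unitriangularity of $\xt_\lambda$ against $s_\lambda$) is a reasonable point of rigor that the paper leaves implicit.
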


\subsection{Coproducts on the irreducible character basis}
The method that we used in the last section to derive
coproduct formula can also be used to derive the coproduct for
the irreducible character basis.  In this case though we reverse
the expression and expand $\st_\lambda[X+Y]$ evaluated at
$X = \Xi_\mu$ and $Y = \Xi_\nu$.

\begin{theorem} \label{th:coprodst}For a partition $\lambda$,
\begin{equation}\label{eq:stcoproduct}
\Delta(\st_\lambda) =
\sum_{\delta: |\delta|\leq|\la|}
\sum_{\beta \vdash |\la|-|\delta|}
\sum_{\eta}
c^\la_{\delta\beta} \st_{\delta} \otimes \st_{\eta}
\end{equation}
where $c^\la_{\delta\beta}$ is the Littlewood-Richardson coefficient
and the inner sum is over partitions $\eta$ such
that the skew partition $\beta/\eta$
is a horizontal strip.

In particular, the coefficient of $\st_\delta \otimes \st_\eta$
in $\Delta(\st_\lambda)$ is equal to
$$\sum_{\beta} c_{\delta\beta}^\la$$
where the sum is over all partitions
$\beta$ such that $\be/\eta$ is a horizontal strip of size $|\la|-|\delta|-|\eta|$.
\end{theorem}

\begin{proof}
Assume that $\mu \vdash N$ where $N$ is ``sufficiently large''.  We can
without loss of generality assume that $N$ is larger than $2|\la|$ because
we will apply Proposition \ref{prop:rootsimplsf}.
We need to show that the following identity holds for all partitions $\mu$, such that $|\mu|>n$ for
some $n$ which is at least as large the degree of the symmetric function.

\begin{align}
\st_\la[\Xi_\mu+ \Xi_\nu] &= \left< s_{(|\mu|+|\nu|-|\la|,\la)}, p_\mu p_\nu \right>\nonumber\\
&= \left< \Delta(s_{(|\mu|+|\nu|-|\la|,\la)}), p_\mu \otimes p_\nu \right>\nonumber\\
&= \sum_{\al\vdash|\mu|}
\left< s_\al, p_\mu \right>
\left< s_{(|\mu|+|\nu|-|\la|,\la)/\al}, p_\nu \right>~.\label{eq:step1}
\end{align}
Now we know that $|\al|=|\mu|>2|\la|$, this implies that $(|\mu|+|\nu|-|\la|,\la)/\al$ is
a skew partition where the first row is disconnected.  The skew Schur function
$s_{(|\mu|+|\nu|-|\la|,\la)/\al}$ is then equal to
$s_{\la/{\overline \al}} \cdot s_{(|\mu|+|\nu|-|\la|-|\al|+|{\overline \al}|)} =
s_{\la/{\overline \al}} \cdot s_{(|\nu|-|\la|+|{\overline \al}|)}$
where $\overline \al$ is the partition $\alpha$ with the first row removed.  Since
we know that $\al \vdash |\mu|$ then $\al$ is determined from $\overline \al$.
In addition,   $\overline \al$ is contained in $\lambda$ for $s_{\la/{\overline \al}}$ to be defined, thus
$|\overline \al|\leq |\lambda|$.
Therefore Equation \eqref{eq:step1} is equivalent to
\begin{align}
\st_\la[\Xi_\mu+ \Xi_\nu]&= \sum_{{\overline \al}:|{\overline \al}|\leq|\lambda|}
\left< s_{(|\mu|-|{\overline \al}|,{\overline \al})}, p_\mu \right>
\left< s_{\la/{\overline \al}} \cdot s_{(|\nu|-|\la|+|{\overline \al}|)}, p_\nu \right>\nonumber\\
&= \sum_{{\overline \al}:|{\overline \al}|\leq|\la|} \sum_{\be \vdash |\la|-|{\overline \al}|}
c^\la_{{\overline \al}\be}
\left< s_{(|\mu|-|{\overline \al}|,{\overline \al})}, p_\mu \right>
\left< s_{\be} \cdot s_{(|\nu|-|\la|+|{\overline \al}|)}, p_\nu \right>\nonumber\\
&= \sum_{{\overline \al}:|{\overline \al}|\leq|\lambda|}
\sum_{\be \vdash |\la|-|{\overline \al}|}
\sum_{\ga}
c^\la_{{\overline \al}\be}
\left< s_{(|\mu|-|{\overline \al}|,{\overline \al})}, p_\mu \right>
\left< s_{\ga}, p_\nu \right>\label{eq:step2}
\end{align}
where the sum over $\gamma$ is of partitions such that
$\ga/\be$ is a horizontal strip of size $|\nu|-|\lambda|+|{\overline \alpha}|$.
But if $\ga/\be$ is a horizontal strip, then $\be/{\overline \ga}$
will also be a horizontal strip.  Moreover we know that since $\ga \vdash |\nu|$
and $\ga$ is determined by ${\overline \ga}$, then Equation \eqref{eq:step2} is equivalent
to
\begin{align*}
\st_\la[\Xi_\mu+ \Xi_\nu] &=\sum_{{\overline \al}:|{\overline \al}|\leq|\la|}
\sum_{\be \vdash |\la|-|{\overline \al}|}
\sum_{{\overline \ga}}
c^\la_{{\overline \al}\be}
\left< s_{(|\mu|-|{\overline \al}|,{\overline \al})}, p_\mu \right>
\left< s_{(|\nu|-|{\overline \ga}|, {\overline \ga})}, p_\nu \right>\\
&= \sum_{{\overline \al}:|{\overline \al}|\leq|\la|}
\sum_{\be \vdash |\la|-|{\overline \al}|}
\sum_{{\overline \ga}}
c^\la_{{\overline \al}\be} \st_{\overline \al}[\Xi_\mu] \st_{\overline \ga}[\Xi_\nu]
\end{align*}
where the inner sums are over partitions ${\overline \ga}$
such that $\be/{\overline \ga}$ is a horizontal strip.
Using the same argument that we did for the induced trivial character
basis at the end of the proof of Proposition \ref{prop:coprodht}, we conclude that
\begin{equation}\label{eq:result}
\Delta(\st_\lambda) =
\sum_{{\overline \al}}
\sum_{\be \vdash |\la|-|{\overline \al}|}
\sum_{{\overline \ga}}
c^\la_{{\overline \al}\be} \st_{\overline \al} \otimes \st_{\overline \ga}~.
\end{equation}
Equation \eqref{eq:result} is equivalent to Equation \eqref{eq:stcoproduct} by setting $\delta=\overline \al$ and $\eta = \overline \ga$.
\end{proof}

\begin{remark}
The \emph{antipode} of a Hopf algebra is part of its defining structure.
In the case of the symmetric functions, the antipode
is an involution $S : Sym \rightarrow Sym$ where $S(s_\lambda) = (-1)^{|\lambda|} s_{\lambda'}$.
The result of Assaf-Speyer \cite{AS} implies $(-1)^{|\lambda|} S( \st_\lambda )$ will be
Schur positive and hence this expression expands positively in the irreducible character basis.
A better understanding of the transition coefficients between the Schur basis and the irreducible character basis
could be used to give a more precise formula for this expression.
\end{remark}

\section{Expansion of the elementary symmetric functions in the irreducible character basis}

In \cite{OZ} we gave the expansion of the complete symmetric function $h_\mu$ in terms of the
irreducible character basis $\st_\la$.   In this section we give the $\st_\la$ expansion of an
elementary symmetric function $e_\mu$.  The proof is similar to the proof of the irreducible character expansion
of a complete symmetric function, see \cite{OZ} Theorem 9.
We will start by proving the expansion in the irreducible character basis except that we
will reserve some of the detailed combinatorial calculations for last.
We assume that there is a total order on the sets that appear in
set partitions and create tableaux to keep track of the terms
in the symmetric function expansion.  The order chosen does not matter, all the matters is
that there is a total order, in the examples we have chosen lexicographic order.
 Let $shape(T)$ be a partition representing the
shape of a tableau $T$ and we again use the overline
notation on a partition to represent the partition with the first
part removed,
${\overline \la} = (\la_2, \la_3, \ldots, \la_{\ell_\la})$.

\begin{theorem}\label{thm:stexpansionofe}
For a partition $\mu$,
\begin{equation}
e_\lambda = \sum_T \st_{\overline{shape(T)}}
\end{equation}
where the sum is over tableaux that are of skew shape $\nu/(\nu_2)$
for some partition $\nu$ and that are weakly increasing in rows and columns
with non-empty sets as labels of the tableaux
(not multisets, that is no repeated values are allowed)
such that the content of the tableau is
$\dcl1^{\la_1},2^{\la_2},\ldots,\ell^{\la_\ell}\dcr$.
A set is allowed to appear multiple times in
the same column if and only if
the set has an odd number of entries. A set is allowed
to appear multiple times in the same row if and only if
the set has an even number of entries.
\end{theorem}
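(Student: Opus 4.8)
The plan is to mimic the strategy that was used throughout Section \ref{sec:coprods} and in the analogous complete-homogeneous result of \cite{OZ}: reduce the symmetric-function identity to an identity of character evaluations, and then use the combinatorics of set partitions to organize the terms. First I would fix the dual statement. Since $\st_\lambda[\Xi_\nu] = \chi^{(|\nu|-|\lambda|,\lambda)}(\nu)$ for $|\nu|$ large and the $e_\mu$ are ordinary symmetric functions, by Corollary \ref{cor:evalsf} it suffices to show that the two sides agree after evaluation at $\Xi_\nu$ for all partitions $\nu$ with $|\nu|$ sufficiently large. On the left, $e_\mu[\Xi_\nu]$ is a product $\prod_i e_{\mu_i}[\Xi_\nu]$, and each $e_{\mu_i}[\Xi_\nu]$ is the elementary symmetric polynomial evaluated at the multiset of eigenvalues $\Xi_\nu$ of a permutation of cycle type $\nu$. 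Representation-theoretically this is the character of the $\mu_i$-th exterior power of the permutation module, so $e_\mu[\Xi_\nu]$ is the character of the tensor product $\bigotimes_i \Lambda^{\mu_i}(\CC^n)$ evaluated at $\sigma$ of cycle type $\nu$, for $n=|\nu|$.

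The heart of the argument is therefore a decomposition of $\bigotimes_i \Lambda^{\mu_i}$ as an $S_n$-module into irreducibles $\SS^{(n-|\overline{shape(T)}|,\,\overline{shape(T)})}$, indexed by the asserted tableaux. I would build this by induction on $\ell(\mu)$, multiplying in one factor $e_{\mu_i}$ at a time and using the Pieri-type products for the $\st$ basis established in \cite{OZ2}, which handle exactly products $\st_{k}\st_\gamma$ with $k$ an integer part. The key combinatorial bookkeeping is that $\Lambda^{k}$ restricted to $S_n$ corresponds to $\st$-expansions whose shapes are read off by adding, at each stage, a set of size $k$ to a growing tableau; the alternating signs coming from the exterior power are encoded by the two parity rules in the statement. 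Concretely, I would set up a sign-reversing involution on the set of candidate fillings: a set of even size is forbidden from repeating in a column, and a set of odd size is forbidden from repeating in a row, because such a repeat would produce a pair of tableaux of opposite sign that cancel in the exterior-power (hence $\bfp$-versus-$\bfpb$) expansion. This parity dichotomy is precisely the combinatorial shadow of antisymmetrization.

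The main obstacle I anticipate is verifying that the surviving tableaux — those weakly increasing in rows and columns, with nonempty set labels, and obeying the two parity rules — are in bijection with the terms that remain after all cancellation, i.e.\ that the involution is genuinely sign-reversing and fixed-point-free off the claimed set. This is the step the theorem statement itself defers (``we will reserve some of the detailed combinatorial calculations for last''). I expect the cleanest route is to phrase the whole computation in terms of the two power-sum analogues $\bfp_\lambda$ and $\bfpb_\lambda$ of \eqref{eq:powerrects}, since $e$ has a clean expansion in the ordinary $p_\lambda$ and the conversion $p\to\bfp,\bfpb$ carries the falling-factorial signs that drive the parity rules; then the transition to the $\st$ basis via \eqref{eq:pexpansionofcharbases} produces the characters $\chi^{(n-|\overline{shape(T)}|,\,\overline{shape(T)})}$ directly, and an appeal to Corollary \ref{cor:evalsf} closes the argument. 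The subtle point to get right is the shape constraint $\lambda/(\lambda_2)$ together with the passage from $shape(T)$ to $\overline{shape(T)}$, which is the same first-row-removal phenomenon that appeared in the proof of Theorem \ref{th:coprodst}.
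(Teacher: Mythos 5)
Your opening reduction is the same as the paper's (evaluate both sides at $\Xi_\nu$ and invoke Corollary \ref{cor:evalsf}), but the engine you propose for the combinatorial core would not run. The products from \cite{OZ2} handle $\st_{\lambda_1}\cdots\st_{\lambda_r}\st_\gamma$ with each $\lambda_i$ a \emph{one-row} shape, whereas a single elementary factor expands as $e_k=\st_{(1^k)}+\st_{(1^{k-1})}$ (column shapes), so an induction that multiplies in one $e_{\mu_i}$ at a time immediately requires products $\st_{(1^a)}\st_\gamma$, and beyond the first step general products $\st_\delta\st_\gamma$ --- i.e.\ reduced Kronecker coefficients, exactly what is not available. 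You have also misread what the set labels are: they are not ``sets of size $k$'' added at stage $k$, but subsets of $\{1,\ldots,\ell(\mu)\}$ recording which factors $e_{\mu_i}$ jointly occupy a given cycle of the permutation, with the label $i$ occurring $\mu_i$ times in total over the whole tableau.

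The missing idea is the identity the paper isolates as Corollary \ref{cor:elaexpr}:
\begin{equation*}
e_\la[\Xi_\mu] \;=\; \sum_{\pi \vdash \dcl 1^{\la_1},2^{\la_2},\ldots,\ell^{\la_\ell}\dcr}\; \bigl\langle\, h_{n}\, h_{\mt_e(\pi)}\, e_{\mt_o(\pi)},\; p_\mu \,\bigr\rangle,
\end{equation*}
summed over set partitions of the multiset, where sets of even size contribute $h$-generators and sets of odd size contribute $e$-generators. This is proved by starting from $e_n[\Xi_r]=(-1)^{r-1}\delta_{r=n}$ (Proposition \ref{prop:evalhnXr}), expanding $e_n[\Xi_\mu]$ over subsets of the parts of $\mu$, and exhibiting weight-preserving bijections between the resulting signed objects and the fillings counted by $\HEX_{(\la|\tau),\mu}$ (Proposition \ref{prop:evalhet}). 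Once this is in hand, the two parity rules in the theorem are not the residue of a sign-reversing involution at all: they are read off directly from the two Pieri rules, since an $h_r$ factor lets a set repeat along a row but not a column and an $e_r$ factor does the opposite, and the resulting Schur (hence $\st$) expansion is subtraction-free --- no cancellation among the final tableaux occurs. Your instinct that the sign $(-1)^{r-1}$ from the exterior powers drives the even/odd dichotomy is correct, but without the displayed identity (or an equivalent) there is nothing for the Pieri rules to act on, and the involution you defer to is precisely the step you have not supplied.
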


\begin{proof}
In Corollary \ref{cor:elaexpr} we will show that
\begin{equation}\nonumber
e_\la[\Xi_\mu] = \sum_{\pi \vdash \dcl1^{\la_1}, 2^{\la_2},
\ldots, \ell^{(\la_\ell)}\dcr}
\left< h_{n} h_{\mt_e(\pi)} e_{\mt_o(\pi)}, p_\mu \right>~.
\end{equation}
where $\mt_e(\pi)$ (resp. $\mt_o(\pi)$) is the partition representing the multiplicities of
the sets in $\pi$ with an even (resp. odd) number of elements (see Example \ref{Ex:memo}).

For the rest of this proof, we assume that the reader
is familiar with the Pieri rules,  \cite[Chapter 7, p. 339--340]{Stanley}, which state
$h_r s_\lambda$ is the sum of terms $s_\mu$ where the
Young diagram for $\mu$ differs from the Young diagram
of $\lambda$ by adding cells that may occur in the same
row, but not in the same column of the diagram.  Similarly, $e_r s_\lambda$ is the sum of terms $s_\mu$ where the
Young diagram for $\mu$ differs from the Young diagram
of $\lambda$ by adding cells that may occur in the same
column, but not in the same row of the diagram.

Now order the occurrences of the generators in the product
$h_{n} h_{\mt_e(\pi)} e_{\mt_o(\pi)}$ so that they are in the
same order as the total order that is chosen for the sets that
appear in the set partitions.  Using a tableau to keep track of
the terms in the resulting Schur expansion of the product
we will have that
\begin{equation}\nonumber
h_{n} h_{\mt_e(\pi)} e_{\mt_o(\pi)} = \sum_{T} s_{shape(T)}
\end{equation}
where the sum is over tableaux that have $n$ blank cells in
the first row and sets as labels in the rest of the tableau.
Because we multiply by a generator $h_r$ if a set with an
even number of entries occurs $r$ times, then those sets
with an even number of elements can appear multiple times in the
same row, but not in the same column of the tableau.  Similarly,
because we multiply by a generator $e_r$ if a set with an
odd number of entries occurs $r$ times, then those sets
with an odd number of elements can appear multiple times in the
same column, but not in the same row of the tableau.

If the shape of the
tableau is of skew shape $\nu/(n)$ with $n\geq \nu_2$, there are the
same number of tableaux
of skew-shape $(\nu_2 + |{\overline \nu}|, {\overline \nu})/(\nu_2)$
because there is a bijection by deleting blank cells in the first row.

Therefore we have
\begin{equation}\nonumber
e_\la[\Xi_\mu] = \sum_{\pi \vdash \dcl1^{\la_1}, 2^{\la_2},
\ldots, \ell^{\la_\ell}\dcr}
\sum_T
\left< s_{shape(T)}, p_\mu \right> = \sum_{\pi \vdash \dcl1^{\la_1}, 2^{\la_2},
\ldots, \ell^{\la_\ell}\dcr}
\sum_T
\st_{\overline{shape(T)}}[\Xi_\mu]
\end{equation}
where the sum is over those tableau described in the
statement of the proposition.
Our proposition now follows from Proposition \ref{prop:rootsimplsf}.
\end{proof}

\begin{example}
To begin with a small example, consider the expansion of
$e_{21}$.
The following $11$ tableaux follow the
rules outlined in Theorem \ref{thm:stexpansionofe}.
\begin{equation*}
\young{1\cr1&2\cr&\cr}\hskip .2in
\young{2\cr1\cr1\cr\cr}\hskip .2in
\young{1\cr1\cr&2\cr}\hskip .2in
\young{2\cr1\cr&1\cr}\hskip .2in
\young{12\cr1\cr\cr}\hskip .2in
\young{1&2\cr&&1\cr}\hskip .2in
\young{1&12\cr&\cr}\hskip .2in
\young{1\cr&1&2\cr}\hskip .2in
\young{12\cr&1\cr}\hskip .2in
\young{1\cr&12\cr}\hskip .2in
\young{1&12\cr}
\end{equation*}
Theorem \ref{thm:stexpansionofe} then states that
\begin{equation}\nonumber
e_{21} = \st_{21}+\st_{111}+3 \st_{11}+2\st_{2}+3\st_{1}+\st_{()} ~.
\end{equation}
\end{example}

\begin{example}A slightly larger example is the expansion
of $e_{33}$.  If we use Sage \cite{sage,sage-co}
to determine the expansion
we see that
\begin{align*}
e_{33} = &2\st_{()} + 4\st_{1} + 4\st_{11} + 4\st_{111}
+ 4\st_{1111} +
3\st_{11111} + \st_{111111} + 6\st_{2}\\
&+ 8\st_{21} + 7\st_{211} +
4\st_{2111} + \st_{21111} + 5\st_{22} + 4\st_{221}
+ \st_{2211}
+ \st_{222}\\
&+ 5\st_{3} + 4\st_{31} + \st_{311} + \st_{32}
+ \st_{4}~.
\end{align*}
Hence, summing the coefficients,
we see that there are $71$ tableaux in total satisfying the conditions
of Theorem \ref{thm:stexpansionofe} that have a total content
$\dcl1^3,2^3\dcr$.  Listing all $71$ tableaux is
perhaps too large to be a useful example of this theorem,
so let us consider just the
coefficient of $\st_{21}$.  Notice that the content of the tableaux is $\dcl1^3, 2^3\dcr$.
Since the cells of the tableaux must contain sets, there are four ways to partition
$\dcl1^3, 2^3\dcr$ into sets:  $\dcl \{1\},\{1\}, \{1\}, \{2\}, \{2\}, \{2\} \dcr$;
$\dcl \{1\},\{1\}, \{1, 2\}, \{2\}, \{2\} \dcr$;
$\dcl \{1\}, \{1,2\}, \{1, 2\}, \{2\} \dcr$;
and $\dcl \{1,2\},\{1,2\},\{1,2\} \dcr$.   The shapes of the tableaux are of the form $(r, 2,1)/(2)$ where
$r$ depends on the number of boxes filled in the first row.  For the multiset partition $\dcl \{1\},\{1\}, \{1\}, \{2\}, \{2\}, \{2\} \dcr$,
the shape would be $(5,2,1)/(2)$ because we need to fill six boxes, three of these in the first row; however  we cannot
have two equal odd sets in any row,  which means that this partition does not contribute to the coefficient.
Similarly, the partition $\dcl \{1,2\},\{1,2\},\{1,2\} \dcr$
would fill a shape $(2,2,1)/(2)$ (in this case there are no filled boxes in the first row),
but since we cannot repeat sets of even size on any column this partition
does not contribute to the coefficient.

The other two partitions contribute to the coefficient of $\st_{(2,1)}$.
Notice that we have ordered the sets $\{1\}<\{1,2\}<\{2\}$.
The partition $\dcl \{1\},\{1\}, \{1, 2\}, \{2\}, \{2\} \dcr$  will fill tableaux of shape $(4,2,1)/(2)$  and the
partition  $\dcl \{1\}, \{1,2\}, \{1, 2\}, \{2\} \dcr$ will fill tableaux of shape $(3,2,1)/(2)$.  Below we list the
$8$ tableaux that contribute to the coefficient.
\begin{equation*}
\young{1\cr1&2\cr&&12&2\cr}\hskip .2in
\young{12\cr1&2\cr&&1&2\cr}\hskip .2in
\young{2\cr1&12\cr&&1&2\cr}\hskip .2in
\young{2\cr1&2\cr&&1&12\cr}
\end{equation*}
\begin{equation*}
\young{2\cr12&12\cr&&1\cr}\hskip .2in
\young{12\cr1&2\cr&&12\cr}\hskip .2in
\young{2\cr1&12\cr&&12\cr}\hskip .2in
\young{12\cr1&12\cr&&2\cr}
\end{equation*}
\end{example}

In  \cite[Lemma 5.10.1]{Lascoux}, Lascoux showed the following result.  This result
will serve as the starting point for our computations.
In the following expressions, the notation $r|n$ indicates shorthand
for ``$r$ divides $n$.''

\begin{prop} \label{prop:evalhnXr}
For $r \geq 0$, $h_0[\Xi_r] = e_0[\Xi_r]=p_0[\Xi_r]=1$.
In addition, for $n>0$,
\begin{equation}
h_n[\Xi_r] = \delta_{r|n}, \hskip .2in
p_n[\Xi_r] = r \delta_{r|n}, \hskip .2in
e_n[\Xi_r] = (-1)^{r-1}\delta_{r=n}~.
\end{equation}
\end{prop}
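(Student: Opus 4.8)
The plan is to evaluate the standard generating functions for the $h_n$ and $e_n$ on the alphabet $\Xi_r$ and simply read off the coefficients, since $\Xi_r$ is nothing but the finite alphabet of $r$-th roots of unity $1,\zeta,\zeta^2,\ldots,\zeta^{r-1}$ with $\zeta = e^{2\pi i/r}$. Writing $H(t) = \sum_{n\geq 0} h_n t^n$ and $E(t) = \sum_{n\geq 0} e_n t^n$ for the usual series, the product formulae give $H(t)[\Xi_r] = \prod_{j=0}^{r-1}(1-\zeta^j t)^{-1}$ and $E(t)[\Xi_r] = \prod_{j=0}^{r-1}(1+\zeta^j t)$. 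Because $\Xi_r$ is a finite alphabet these are honest finite products, so there are no convergence issues and everything may be handled as an identity of polynomials (or formal power series) in $t$.

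The single computation driving everything is the cyclotomic factorization
\begin{equation*}
\prod_{j=0}^{r-1}(1-\zeta^j t) = 1 - t^r \qquad (r \geq 1),
\end{equation*}
which I would justify by noting that both sides are degree-$r$ polynomials in $t$ vanishing exactly at the $r$-th roots of unity and taking the value $1$ at $t=0$, hence are equal; equivalently, multiply $\prod_j(x-\zeta^j) = x^r - 1$ through by $t^r$ after the substitution $x = 1/t$. Inverting gives $H(t)[\Xi_r] = (1-t^r)^{-1} = \sum_{m\geq 0} t^{rm}$, and extracting the coefficient of $t^n$ yields $h_n[\Xi_r] = \delta_{r|n}$, which is the complete-homogeneous assertion.

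For the elementary functions I would replace $t$ by $-t$ in the factorization to obtain $E(t)[\Xi_r] = 1 - (-1)^r t^r = 1 + (-1)^{r-1} t^r$ (equivalently, use the standard relation $H(t)E(-t) = 1$), so that the coefficient of $t^n$ gives $e_0[\Xi_r] = 1$ and $e_n[\Xi_r] = (-1)^{r-1}\delta_{r=n}$ for $n>0$. For the power sums the quickest route is the direct geometric sum $p_n[\Xi_r] = \sum_{j=0}^{r-1} \zeta^{jn}$, which equals $r$ when $r \mid n$ and telescopes to $0$ otherwise (alternatively, apply $\sum_n p_n t^{n-1} = H'(t)/H(t)$ to $H(t)[\Xi_r]=(1-t^r)^{-1}$). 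The base cases $h_0[\Xi_r]=e_0[\Xi_r]=p_0[\Xi_r]=1$ are immediate from $h_0=e_0=p_0=1$ in $Sym$, and the degenerate case $r=0$ (the empty alphabet) is checked separately, where all three series collapse to the constant $1$, consistent with the convention $0 \mid n \iff n=0$. There is no genuine obstacle in this proposition: it is essentially a direct computation, and the only thing requiring care is tracking the sign in the elementary case and the $r=0$ boundary, with the whole statement resting on the factorization identity above.
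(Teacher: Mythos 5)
Your proof is correct and complete: the factorization $\prod_{j=0}^{r-1}(1-\zeta^j t)=1-t^r$ applied to the generating functions $H(t)$, $E(t)$ and the geometric sum for $p_n$ gives exactly the three stated evaluations, and you handle the sign and the degenerate $r=0$ case properly. The paper itself offers no proof of this proposition --- it simply cites Lascoux (Lemma 5.10.1 of \cite{Lascoux}) --- so there is nothing internal to compare against, but your argument is the standard one and would serve as a self-contained justification.
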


We will need the evaluation and a combinatorial interpretation
of $e_\lambda[\Xi_\mu]$ in order to make a
connection with character symmetric functions.

To extend this further, we determine the evaluation of
an elementary symmetric function at $\Xi_\mu$.
For a subset $S = \{i_1, i_2, \ldots, i_{|S|}\} \subseteq
\{1,2,\ldots,\ell(\mu)\}$, let $\mu_S$
denote the sub-partition $(\mu_{i_1}, \mu_{i_2}, \ldots, \mu_{i_{|S|}})$.
This implies that
\begin{equation}
\label{eq:enatXimu}
e_n[\Xi_\mu] = \sum_{\substack{\alpha \models_w n\\
\ell(\alpha) = \ell(\mu)}}
\prod_{i=1}^{\ell(\mu)} e_{\alpha_i}[\Xi_{\mu_i}]
=
\sum_{S : |\mu_S| = n}
\prod_{i \in S} e_{\mu_i}[\Xi_{\mu_i}]
= \sum_{S : |\mu_S| = n} (-1)^{n+|S|}
\end{equation}
where the sum is over all subsets $S \subseteq \{1,2,\ldots,\ell(\mu)\}$
such that $|\mu_S| = n$.

\begin{definition}
Define the set $\oC_{\la,\mu}$ to be the set of sequences
$( S^{(1)}, S^{(2)}, \ldots, S^{(\ell(\la))})$
where each $S^{(i)}$ is a subset such that
$|\mu_{S^{(i)}}| = \la_i$.
\end{definition}

Since $e_\lambda[\Xi_\mu] =
e_{\la_1}[\Xi_\mu]e_{\la_2}[\Xi_\mu] \cdots e_{\la_{\ell(\la)}}[\Xi_\mu]$,
it implies that we have the following Proposition for evaluating
this expression.
\begin{prop}For partitions $\la$ and $\mu$,
\begin{equation}
e_\lambda[\Xi_\mu] =
\sum_{S^{(\ast)} \in \oC_{\lambda,\mu}} (-1)^{|\la|+|S^{(\ast)}|}
\end{equation}
where $|S^{(\ast)}| = \sum_{i=1}^{\ell(\la)} |S^{(i)}|$.
\end{prop}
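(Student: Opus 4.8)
The plan is to derive the claimed formula directly from the single-part case recorded in Equation \eqref{eq:enatXimu}, exploiting the fact that $e_\lambda$ factors as a product of one-row elementary functions. First I would invoke the multiplicativity of evaluation at a fixed alphabet, namely
$$e_\lambda[\Xi_\mu] = e_{\la_1}[\Xi_\mu]\, e_{\la_2}[\Xi_\mu] \cdots e_{\la_{\ell(\la)}}[\Xi_\mu],$$
which is simply the identity $e_\lambda = e_{\la_1} e_{\la_2} \cdots e_{\la_{\ell(\la)}}$ in $Sym$ combined with the fact that $f \mapsto f[\Xi_\mu]$ is a ring homomorphism, so it carries products to products. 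This reduces the problem to expanding a product of $\ell(\la)$ already-known factors.

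Next I would substitute the evaluation of each factor supplied by Equation \eqref{eq:enatXimu}, which reads
$$e_{\la_i}[\Xi_\mu] = \sum_{S^{(i)} : |\mu_{S^{(i)}}| = \la_i} (-1)^{\la_i + |S^{(i)}|},$$
the inner sum ranging over subsets $S^{(i)} \subseteq \{1, 2, \ldots, \ell(\mu)\}$ with $|\mu_{S^{(i)}}| = \la_i$. Distributing the product of these $\ell(\la)$ sums into a single sum amounts to choosing one subset $S^{(i)}$ for each part $\la_i$ subject to the size constraint $|\mu_{S^{(i)}}| = \la_i$; by the definition immediately preceding the statement, such a tuple $(S^{(1)}, S^{(2)}, \ldots, S^{(\ell(\la))})$ is exactly an element of $\oC_{\la,\mu}$. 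Collecting the signs across the factors gives
$$\prod_{i=1}^{\ell(\la)} (-1)^{\la_i + |S^{(i)}|} = (-1)^{\sum_i \la_i + \sum_i |S^{(i)}|} = (-1)^{|\la| + |S^{(\ast)}|},$$
since $\sum_i \la_i = |\la|$ and $\sum_i |S^{(i)}| = |S^{(\ast)}|$ by definition, which is the asserted identity.

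There is essentially no obstacle here: the argument is a distribution of a product of sums followed by routine sign bookkeeping. The only point deserving a moment of care is the final step, where one must confirm that the exponents $\la_i$ and $|S^{(i)}|$ aggregate correctly to $|\la| + |S^{(\ast)}|$, and that the index set produced by distributing the product coincides precisely with $\oC_{\la,\mu}$ — one independent subset per part, each meeting its own size constraint $|\mu_{S^{(i)}}| = \la_i$, with no interaction between the choices for distinct parts.
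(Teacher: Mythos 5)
Your argument is correct and is essentially identical to the paper's: the paper deduces the proposition in one line from the factorization $e_\lambda[\Xi_\mu]=e_{\la_1}[\Xi_\mu]\cdots e_{\la_{\ell(\la)}}[\Xi_\mu]$ combined with Equation \eqref{eq:enatXimu}, exactly the route you take. Your sign bookkeeping $\prod_i(-1)^{\la_i+|S^{(i)}|}=(-1)^{|\la|+|S^{(\ast)}|}$ and the identification of the distributed index set with $\oC_{\la,\mu}$ are both correct.
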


\begin{example} Let $n=4$, then to evaluate
$e_4[\Xi_{3211}]$ there are three subsets of parts of $(3,2,1,1)$
which sum to $4$, namely, $\{1,3\}, \{1,4\}$ and $\{2,3,4\}$.
The first two are counted with weight $1$ and the third has weight $-1$,
hence $e_4[\Xi_{3211}] = 1+1-1=1$.

To evaluate $e_{31}[\Xi_{3211}]$ we determine that
$\oC_{31,3211} = \{(\{1\},\{3\})$, $(\{1\},\{4\}),$
$(\{2,3\},\{3\}),$ $(\{2,3\},\{4\}),$
$(\{2,4\},\{3\})$, $(\{2,4\},\{4\})\}$.
The first two
of these have weight $(-1)^{|\la|+|S^{(\ast)}|}$ both equal to $1$
and the last four have weight $-1$ hence $e_{31}[\Xi_{3211}] = -2$.
\end{example}

Let $\gamma^{(\ast)} = (\ga^{(0)}, \ga^{(1)}, \ga^{(2)}, \ldots, \ga^{(r)})$
be sequences of partitions such that $\bigcup_{i=0}^r \ga^{(i)} = \mu$,
then we may use Equation \eqref{eq:zeela} to compute
\begin{align}
\frac{z_\mu}{z_{\ga^{(0)}} z_{\ga^{(1)}} z_{\ga^{(2)}}
\cdots z_{\ga^{(r)}}}
&= \prod_{i \geq 1}
\pchoose{m_i(\mu)}{
m_i(\ga^{(1)}),
m_i(\ga^{(2)}), \ldots, m_i(\ga^{(r)})}\label{eq:multiptnchoose}
\end{align}
and the parts of $\ga^{(0)}$ are determined from $\mu$ and all of the
$\ga^{(i)}$ for $1 \leq i \leq r$.

Now we will need to evaluate
$\HEX_{(\la|\tau),\mu} :=
\left< p_\mu, h_{|\mu|-|\la| - |\tau|} h_\lambda e_\tau\right>$
where $\la, \tau$ and $\mu$ are partitions.
We do this by expanding the expression $h_{|\mu|-|\la| - |\tau|} h_\lambda e_\tau$.
For each sequence of partitions $\ga^{(\ast)}$ of length $\ell(\la)$ and
each sequence of partitions $\nu^{(\ast)}$ of length $\ell(\tau)$, there
will be one term in the sum coming from the expansion of the product of $h_\lambda$ and $e_\tau$.
Expanding the expression for $\HEX_{(\la|\tau),\mu}$, yields
\begin{equation*}
\HEX_{(\la|\tau),\mu} =
\sum_{\ga^{(\ast)},\nu^{(\ast)}}
sgn(\nu^{(\ast)})
\prod_{i=1}^{\mu_1}
\pchoose{m_i(\mu)}{m_i(\ga^{(1)}),
\ldots,m_i(\ga^{(\ell(\la))}),m_i(\nu^{(1)}),
\ldots,m_i(\nu^{(\ell(\tau))})}
\end{equation*}
where the sum is over all sequences
of partitions $\ga^{(\ast)} =
(\ga^{(1)}, \ga^{(2)}, \ldots, \ga^{(\ell(\la))})$
where $\ga^{(j)} \vdash \lambda_j$ and
$\nu^{(\ast)} = (\nu^{(1)}, \nu^{(2)}, \ldots, \nu^{(\ell(\tau))})$
where $\nu^{(j)} \vdash \tau_j$ and
\begin{equation}\nonumber
sgn(\nu^{(\ast)}) = (-1)^{\sum_i |\nu^{(i)}|+\ell(\nu^{(i)})}~.
\end{equation}
Note that we are using the convention that
if $\bigcup_i \ga^{(i)} \cup \bigcup_i \nu^{(i)}$ is
not a subset of the parts of $\mu$ then the weight
$$\prod_{i=1}^{\mu_1}
\pchoose{m_i(\mu)}{m_i(\ga^{(1)}),
\ldots,m_i(\ga^{(\ell(\la))}),m_i(\nu^{(1)}),
\ldots,m_i(\nu^{(\ell(\tau))})}$$
is equal to $0$.

\begin{prop}\label{prop:evalhet}
For partitions $\la, \tau$ and $\mu$, let
${\mathcal F}^\mu_{\lambda,\tau}$ be the fillings of the diagram
for the partition $\mu$ with $\la_i$ labels $i$ and
$\tau_j$ labels $j'$ such that all cells in a row are filled with
the same label such that cells in any row are either all filled or all empty.
For $F \in {\mathcal F}^\mu_{\lambda,\tau}$,
the weight of the filling, $wt(F)$ is equal to $-1$ raised to the number
of cells filled with primed labels plus the number of rows occupied
by the primed labels.  Then
\begin{equation}
\HEX_{(\lambda|\tau),\mu}
= \sum_{F \in {\mathcal F}^\mu_{\lambda,\tau}}
wt(F)~.
\end{equation}
\end{prop}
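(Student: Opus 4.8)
The plan is to take the explicit expansion of $\HEX_{(\la|\tau),\mu}$ derived immediately above the statement and reorganize its sum over pairs of sequences $(\ga^{(\ast)},\nu^{(\ast)})$ into a sum over fillings. That expansion writes $\HEX_{(\la|\tau),\mu}$ as a sum, over all sequences $\ga^{(\ast)}=(\ga^{(1)},\ldots,\ga^{(\ell(\la))})$ with $\ga^{(j)}\vdash\la_j$ and $\nu^{(\ast)}=(\nu^{(1)},\ldots,\nu^{(\ell(\tau))})$ with $\nu^{(j)}\vdash\tau_j$, of the sign $sgn(\nu^{(\ast)})$ times the product over part sizes $i$ of the multinomial coefficients appearing there, whose implicit leftover bin absorbs the parts of $\mu$ taken by the free factor $h_{|\mu|-|\la|-|\tau|}$.

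First I would read off the combinatorial meaning of this product of multinomials. For each part size $i$, the factor $\pchoose{m_i(\mu)}{\cdots}$ counts the distributions of the $m_i(\mu)$ rows of length $i$ of the diagram of $\mu$ among the bins indexed by the labels $1,\ldots,\ell(\la)$, the primed labels $1',\ldots,\ell(\tau)'$, and one blank bin, in which label $k$ collects $m_i(\ga^{(k)})$ of these rows and label $j'$ collects $m_i(\nu^{(j)})$ of them. Multiplying over all $i$, the product counts precisely the fillings $F$ of the diagram of $\mu$ in which every row carries a single label (or is left blank), the multiset of lengths of the rows labeled $k$ is $\ga^{(k)}$, and the multiset of lengths of the rows labeled $j'$ is $\nu^{(j)}$; each row is monochromatic because, in the expansion of $h_{|\mu|-|\la|-|\tau|}h_\la e_\tau$, a given part of $\mu$ is assigned to exactly one generator. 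The content conditions $\ga^{(k)}\vdash\la_k$ and $\nu^{(j)}\vdash\tau_j$ say exactly that label $k$ occupies $\la_k$ cells and label $j'$ occupies $\tau_j$ cells, so $F$ runs over ${\mathcal F}^\mu_{\lambda,\tau}$. Exchanging the order of summation — each $F$ determines a unique datum $(\ga^{(\ast)},\nu^{(\ast)})$, and the multinomial product counts the fillings realizing a fixed datum with multiplicity one — collapses the double sum to $\HEX_{(\la|\tau),\mu}=\sum_{F\in{\mathcal F}^\mu_{\lambda,\tau}} sgn(\nu^{(\ast)}(F))$.

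It then remains to check that $sgn(\nu^{(\ast)}(F))=wt(F)$, which is a parity computation. Since $sgn(\nu^{(\ast)})=(-1)^{\sum_i |\nu^{(i)}|+\ell(\nu^{(i)})}$ and $-x\equiv x\pmod 2$, the exponent is congruent to $\sum_i|\nu^{(i)}|+\sum_i\ell(\nu^{(i)})$. For the filling $F$, the quantity $\sum_i|\nu^{(i)}|$ is the number of cells lying in primed rows, that is the number of cells filled with a primed label, while $\sum_i\ell(\nu^{(i)})$ is the number of rows carrying a primed label, that is the number of rows occupied by the primed labels. Hence $sgn(\nu^{(\ast)}(F))$ is $-1$ raised to the number of primed cells plus the number of primed rows, which is exactly $wt(F)$, proving the proposition. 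The only step needing care is the bookkeeping of the free factor $h_{|\mu|-|\la|-|\tau|}$: its parts must be pooled into the single leftover bin shared by all of the multinomials, so that the blank rows form one undifferentiated group and are not over-counted; this is the main (and essentially only genuine) obstacle.
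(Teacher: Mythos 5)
Your proposal is correct and follows essentially the same route as the paper's own proof: the paper likewise observes that, for fixed $(\ga^{(\ast)},\nu^{(\ast)})$, the product of multinomial coefficients counts exactly the fillings in ${\mathcal F}^\mu_{\lambda,\tau}$ whose rows realize those partitions, and that the sign is constant on each such class and equals $sgn(\nu^{(\ast)})$, which matches $wt(F)$. Your additional bookkeeping of the blank bin for $h_{|\mu|-|\la|-|\tau|}$ and the explicit parity check are just a more detailed writing-out of the same argument.
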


\begin{proof} This is precisely the analogous statement to
Proposition 28 of \cite{OZ}.  It follows because if we fix
the sequences of partitions $\ga^{(\ast)}$ and
$\nu^{(\ast)}$ such that $\ga^{(i)} \vdash \la_i$
and $\nu^{(j)}\vdash \tau_j$, then the quantity
\begin{equation}\nonumber
\prod_{i=1}^{\mu_1} \pchoose{m_i(\mu)}{m_i(\ga^{(1)}),
\ldots,m_i(\ga^{(\ell(\la))}),m_i(\nu^{(1)}),
\ldots,m_i(\nu^{(\ell(\tau))})}
\end{equation}
is precisely the number of $F$ in ${\mathcal F}^\mu_{\lambda,\tau}$
with the rows filled according to the sequences of partitions
$\ga^{(\ast)}$ and $\nu^{(\ast)}$.  The sign of a filling is
constant on this set and is equal to $sgn(\nu^{(\ast)})$.
\end{proof}

\begin{example}The following are all the possible
fillings of the diagram $(3,3,2,2,1,1)$ with two $1$'s and two $1'$'s
such that the rows have the same labels.
\begin{equation*}
\young{1\cr1\cr1'&1'\cr&\cr&&\cr&&\cr}\hskip .2in
\young{1\cr1\cr&\cr1'&1'\cr&&\cr&&\cr}\hskip .2in
\young{\cr\cr1'&1'\cr1&1\cr&&\cr&&\cr}\hskip .2in
\young{\cr\cr1&1\cr1'&1'\cr&&\cr&&\cr}\hskip .2in
\young{1'\cr1'\cr1&1\cr&\cr&&\cr&&\cr}\hskip .2in
\young{1'\cr1'\cr&\cr1&1\cr&&\cr&&\cr}
\end{equation*}
Since the weight of the filling is equal to the
$(-1)$ raised to the number of cells plus the number of
rows occupied by primed entries, the first four have weight
$-1$ and the last two have weight $1$ and hence
\begin{equation}\nonumber
\HEX_{(2|2),332211} = -2~.
\end{equation}
\end{example}

The rest of this section develops the combinatorial constructions
required to show that the evaluations of $e_\lambda$ at roots
of unity are correct.

We have previously used the notation
$\pi \mvdash \dcl1^{\la_1},2^{\la_2}\ldots,\ell^{\la_\ell}\dcr$
to indicate that $\pi$ is a multiset partition of a multiset.
We will then use the notation
$\pi \vdash \dcl1^{\la_1},2^{\la_2}\ldots,\ell^{\la_\ell}\dcr$
to indicate that $\pi$ is a {\it set partition of a multiset}, that is,
$\pi = \dcl P^{(1)}, P^{(2)}, \ldots, P^{(\ell(\pi))}\dcr$ where
$P^{(1)}\uplus P^{(2)}\uplus \cdots \uplus P^{(\ell(\pi))} =
\dcl1^{\la_1},2^{\la_2}\ldots,\ell^{\la_\ell}\dcr$ and
each of the $P^{(i)}$ are sets (no repetitions allowed).
It is possible that $\pi$ itself is a multiset since it is possible that
$P^{(i)} = P^{(j)}$ when $i \neq j$.
In this case we say that $\pi$ is a set partition of a
multiset of content $\la$ (to differentiate from a multiset partition of
a multiset).

Now we have previously defined $\mt(\pi)$ to be a partition
representing the multiplicity of the sets that appear in
$\pi$.  Now define $\mt_e(\pi)$ be a partition representing
the multiplicities of the sets with an even number of elements
and $\mt_o(\pi)$ be a partition representing the multiplicities
of the sets with an odd number of elements.

\begin{example}
\label{Ex:memo}
Let $\lambda = (5,3,3,2,1)$ and then
\begin{equation}\nonumber
\pi = \dcl \{1,2,5\},\{1,2\},\{1,2\},\{1,3\},\{1,3\},\{3,4\},\{4\}\dcr
\end{equation}
is a set partition of the multiset $\dcl1^5,2^3,3^3,4^2,5\dcr$.
The corresponding partition $\mt(\pi) = (2,2,1,1,1)$ and
$\mt_e(\pi) = (2,2,1)$ and $\mt_o(\pi)=(1,1)$.
The sequence $\mt(\pi)$ is a partition of the length of $\pi$
and $\mt_e(\pi) \cup \mt_o(\pi) = \mt(\pi)$.
\end{example}

\begin{definition}For partitions $\la$ and $\mu$, let
$\oP_{\la\mu}$ be the set of pairs $(\pi, T)$ where $\pi$
is a set partition of the multiset
$\dcl1^{\la_1},2^{\la_2},\ldots,\ell(\la)^{\la_{\ell(\la)}}\dcr$
and $T$ is a filling of some of the rows of the diagram for
$\mu$ with content $\ga = \mt_e(\pi)$ consisting of labels
$\dcl1^{\ga_1},2^{\ga_2},\cdots,\ell(\ga)^{\ga_{\ell(\ga)}}\dcr$
and some rows filled with content
$\tau = \mt_o(\pi)$ consisting of primed labels
$\dcl{1'}^{\tau_1},{2'}^{\tau_2},\ldots,{\ell(\tau)'}^{\ell(\tau)}\dcr$.
The {\it weight}, $wt$, of a pair $(\pi,T)$ will be either $\pm1$
and is equal to $-1$ raised to the number of primed labels
plus the number of rows those labels occupy.
\end{definition}

\begin{example}\label{ex:P31a332211}
Consider the set $\oP_{(3,1),(3,3,2,2,1,1)}$ that consists
of the following 12 pairs of set partitions and fillings
{\small
\squaresize=9pt
\begin{equation*}
\left(\raisebox{-5pt}{\dcl\{1\},\!\{1\},\!\{1\},\!\{2\}\dcr\ ,}
\raisebox{-25pt}{\young{\cr2'\cr&\cr&\cr1'&1'&1'\cr&&\cr}}\right)\hskip .1in
\left(\raisebox{-5pt}{\dcl\{1\},\!\{1\},\!\{1\},\!\{2\}\dcr\ , }
\raisebox{-25pt}{\young{2'\cr\cr&\cr&\cr1'&1'&1'\cr&&\cr}}\right)\hskip .1in
\left(\raisebox{-5pt}{\dcl\{1\},\!\{1\},\!\{1\},\!\{2\}\dcr\ ,}
\raisebox{-25pt}{\young{\cr2'\cr&\cr&\cr&&\cr1'&1'&1'\cr}}\right)
\end{equation*}
\begin{equation*}
\left(\raisebox{-5pt}{\dcl\{1\},\!\{1\},\!\{1\},\!\{2\}\dcr\ ,}
\raisebox{-25pt}{\young{2'\cr\cr&\cr&\cr&&\cr1'&1'&1'\cr}}\right)\hskip .1in
\left(\raisebox{-5pt}{\dcl\{1\},\!\{1\},\!\{1\},\!\{2\}\dcr\ ,}
\raisebox{-25pt}{\young{2'\cr1'\cr1'&1'\cr&\cr&&\cr&&\cr}}\right)\hskip .1in
\left(\raisebox{-5pt}{\dcl\{1\},\!\{1\},\!\{1\},\!\{2\}\dcr\ ,}
\raisebox{-25pt}{\young{2'\cr1'\cr&\cr1'&1'\cr&&\cr&&\cr}}\right)
\end{equation*}
\begin{equation*}
\left(\raisebox{-5pt}{\dcl\{1\},\!\{1\},\!\{1\},\!\{2\}\dcr \ ,}
\raisebox{-25pt}{\young{1'\cr2'\cr1'&1'\cr&\cr&&\cr&&\cr}}\right)\hskip .1in
\left(\raisebox{-5pt}{\dcl\{1\},\!\{1\},\!\{1\},\!\{2\}\dcr\ ,}
\raisebox{-25pt}{\young{1'\cr2'\cr&\cr1'&1'\cr&&\cr&&\cr}}\right)\hskip .1in
\left(\raisebox{-5pt}{ \dcl\{1\},\{1\},\{1,2\}\dcr\ ,}
\raisebox{-25pt}{\young{1\cr\cr1'&1'\cr&\cr&&\cr&&\cr}}\right)
\end{equation*}
\begin{equation*}
\left(\raisebox{-5pt}{\dcl\{1\},\!\{1\},\!\{1,2\}\dcr\ ,}
\raisebox{-25pt}{\young{1\cr\cr&\cr1'&1'\cr&&\cr&&\cr}}\right)\hskip .1in
\left(\raisebox{-5pt}{ \dcl\{1\},\!\{1\},\!\{1,2\}\dcr\  ,}
\raisebox{-25pt}{\young{\cr1\cr1'&1'\cr&\cr&&\cr&&\cr}}\right)\hskip .1in
\left(\raisebox{-5pt}{ \dcl\{1\},\!\{1\},\!\{1,2\}\dcr\  ,}
\raisebox{-25pt}{\young{\cr1\cr&\cr1'&1'\cr&&\cr&&\cr}}\right)
\end{equation*}
}
The first four of these pairs have weight $+1$ and
the remaining eight have weight $-1$.
\end{example}

With these definitions, we can
use Proposition \ref{prop:evalhet} to state
that
\begin{equation}\nonumber
\sum_{\pi \vdash \dcl1^{\la_1}, 2^{\la_2},
\ldots, \ell^{(\la_\ell)}\dcr}
\HEX_{(\mt_e(\pi)|\mt_o(\pi)),\mu} = \sum_{F \in \oP_{\la,\mu}} wt(F)~.
\end{equation}

Next we define a set  $\oT_{\la,\mu}$, this set is defined in a similar way
as $\oP_{\la,\mu}$, with the main difference that now the tableaux
will contain the sets that make up the parts of $\pi$.

\begin{definition}For partitions $\la$ and $\mu$ let
$\oT_{\la,\mu}$ be the fillings of some of the cells
of the diagram of the partition $\mu$ with subsets
of $\{1,2,\ldots,\ell(\la)\}$ such that the total
content of the filling is
$\dcl1^{\la_1},2^{\la_2},\ldots,\ell(\la)^{\la_{\ell(\la)}}\dcr$
and such that all cells in the same row have the same
subset of entries.  We will define the weight of one
of these fillings to be $-1$ to the power of the size of
$\la$ plus the number of rows whose cells are occupied by
a set of odd size (this is also equal to the number of cells
plus the number of rows occupied by the sets of odd size).
\end{definition}

\begin{example}The following $12$ tableaux are the
elements of $\oT_{(3,1),(3,3,2,2,1,1)}$.
\begin{equation*}
\young{\cr2\cr&\cr&\cr1&1&1\cr&&\cr}\hskip .2in
\young{2\cr\cr&\cr&\cr1&1&1\cr&&\cr}\hskip .2in
\young{\cr2\cr&\cr&\cr&&\cr1&1&1\cr}\hskip .2in
\young{2\cr\cr&\cr&\cr&&\cr1&1&1\cr}\hskip .2in
\young{2\cr1\cr1&1\cr&\cr&&\cr&&\cr}\hskip .2in
\young{2\cr1\cr&\cr1&1\cr&&\cr&&\cr}
\end{equation*}
\begin{equation*}
\young{1\cr2\cr1&1\cr&\cr&&\cr&&\cr}\hskip .2in
\young{1\cr2\cr&\cr1&1\cr&&\cr&&\cr}\hskip .2in
\young{12\cr\cr1&1\cr&\cr&&\cr&&\cr}\hskip .2in
\young{12\cr\cr&\cr1&1\cr&&\cr&&\cr}\hskip .2in
\young{\cr12\cr1&1\cr&\cr&&\cr&&\cr}\hskip .2in
\young{\cr12\cr&\cr1&1\cr&&\cr&&\cr}
\end{equation*}
The fillings listed above are in the same order
as their isomorphism with the set of pairs $\oP_{(3,1),(3,3,2,2,1,1)}$
from Example \ref{ex:P31a332211}.
As in that case we have that,
the first four of these pairs have weight $+1$ and
the remaining eight have weight $-1$.
\end{example}

The following result should be clear from
the definitions listed above and the examples
we have presented.
\begin{lemma} There is a bijection between
the sets $\oP_{\la,\mu}$ and $\oT_{\la,\mu}$ and
$\oC_{\la,\mu}$
that preserves the weight.
\end{lemma}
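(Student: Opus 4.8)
The plan is to realize the weight-preserving bijection as a composition $\oP_{\la,\mu} \leftrightarrow \oT_{\la,\mu} \leftrightarrow \oC_{\la,\mu}$, building each correspondence explicitly and tracking the sign at each step. First I would construct $\oP_{\la,\mu} \leftrightarrow \oT_{\la,\mu}$ exactly along the lines suggested by the paragraph introducing $\oT_{\la,\mu}$: given $(\pi,T)\in\oP_{\la,\mu}$, replace each unprimed label $i$ of $T$ by an even-size set of $\pi$ of multiplicity $(\mt_e(\pi))_i$ and each primed label $i'$ by an odd-size set of $\pi$ of multiplicity $(\mt_o(\pi))_i$, using the fixed total order on sets to break ties so that the correspondence between labels and distinct sets is unambiguous. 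Since all cells of a given row of $T$ carry a common label, all cells of the corresponding row carry a common set, and the total content of the resulting filling is exactly the content $\dcl 1^{\la_1},\ldots,\ell(\la)^{\la_{\ell(\la)}}\dcr$ of $\pi$; thus the image lies in $\oT_{\la,\mu}$. The inverse recovers $\pi$ as the multiset in which each set occurs with multiplicity equal to the number of cells it fills, and then relabels those cells to recover $T$, so the two maps are mutually inverse.

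For the sign in this first step, I would note that by construction the cells carrying a primed label are precisely the cells filled by an odd-size set, so the number of rows occupied by primed labels equals the number of rows occupied by odd-size sets. The $\oP_{\la,\mu}$-weight is therefore $(-1)$ raised to (number of odd-size cells) $+$ (number of odd-size rows), and this matches the $\oT_{\la,\mu}$-weight once one uses the identity recorded in the definition of $\oT_{\la,\mu}$, namely $|\la| + (\text{odd-size rows}) \equiv (\text{odd-size cells}) + (\text{odd-size rows}) \pmod 2$. This congruence holds because the total content has size $|\la| = \sum_{\text{cells}} |S|$, and a single cell filled by a set $S$ contributes an odd summand exactly when $|S|$ is odd; hence $|\la| \equiv \#\{\text{cells filled by an odd-size set}\} \pmod 2$.

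Next I would treat $\oT_{\la,\mu}\leftrightarrow\oC_{\la,\mu}$ as the transpose of a bipartite incidence relation. A filling $F\in\oT_{\la,\mu}$ assigns to each row $j$ of $\mu$ a subset $S_j\subseteq\{1,\ldots,\ell(\la)\}$ (with $S_j=\emptyset$ for an unfilled row), and I would send $F$ to the sequence $(S^{(1)},\ldots,S^{(\ell(\la))})$ with $S^{(i)}=\{\,j : i\in S_j\,\}$. Because element $i$ appears in the content with multiplicity $\sum_{j\in S^{(i)}}\mu_j$ and this must equal $\la_i$, we get $|\mu_{S^{(i)}}|=\la_i$, so the sequence lies in $\oC_{\la,\mu}$; the inverse fills row $j$ with $\{\,i : j\in S^{(i)}\,\}$. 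Here $|S^{(\ast)}|=\sum_i|S^{(i)}|$ counts the total number of incidences, which equals $\sum_j|S_j|$, and reducing modulo $2$ gives $\sum_j |S_j|\equiv\#\{\,j:|S_j|\text{ odd}\,\}\pmod 2$, i.e.\ the number of rows occupied by an odd-size set. Thus $(-1)^{|\la|+|S^{(\ast)}|}$ equals the $\oT_{\la,\mu}$-weight, and composing the two bijections yields the weight-preserving bijection asserted by the lemma.

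The hard part is purely the sign bookkeeping rather than anything conceptual: the three sets carry weights phrased in three different ways --- via primed labels in $\oP_{\la,\mu}$, via odd-size rows in $\oT_{\la,\mu}$, and via the incidence count $|S^{(\ast)}|$ in $\oC_{\la,\mu}$ --- and one must verify that the two parity reductions $|\la|\equiv\#\{\text{odd-size cells}\}$ and $\sum_j|S_j|\equiv\#\{\text{odd-size rows}\}\pmod 2$ force all three conventions to collapse to the same sign. Making the label-to-set replacement in the first step genuinely well-defined also requires committing once and for all to the fixed total order on sets in order to sort sets of equal size, but beyond this no real difficulty arises, which is why the examples already make the correspondence transparent.
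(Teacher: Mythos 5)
Your argument is correct and is precisely the verification the paper has in mind: the paper gives no written proof, asserting only that the lemma ``should be clear from the definitions,'' and your two explicit correspondences (labels $\leftrightarrow$ sets of $\pi$ via the fixed total order, and filled rows $\leftrightarrow$ the incidence sets $S^{(i)}=\{j: i\in S_j\}$), together with the parity reductions $|\la|\equiv\#\{\hbox{cells with odd-size sets}\}$ and $\sum_j|S_j|\equiv\#\{\hbox{rows with odd-size sets}\}\pmod 2$, are exactly the details being left to the reader. Nothing is missing.
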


\begin{cor} \label{cor:elaexpr} For partitions $\la$ and $\mu$,
\begin{equation}
e_\la[\Xi_\mu] = \sum_{\pi \vdash \dcl1^{\la_1}, 2^{\la_2},
\ldots, \ell^{\la_\ell}\dcr}
\HEX_{(\mt_e(\pi)|\mt_o(\pi)),\mu}
\end{equation}
\end{cor}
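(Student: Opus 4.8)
The plan is to assemble this corollary by chaining together the three results that immediately precede it, so that essentially all of the combinatorial content has already been established. First I would invoke the displayed identity appearing just before the definition of $\oT_{\la,\mu}$, namely
$$\sum_{\pi \vdash \dcl1^{\la_1}, 2^{\la_2}, \ldots, \ell^{\la_\ell}\dcr} \HEX_{(\mt_e(\pi)|\mt_o(\pi)),\mu} = \sum_{F \in \oP_{\la,\mu}} wt(F),$$
which is itself a repackaging of Proposition \ref{prop:evalhet}: each $\HEX_{(\mt_e(\pi)|\mt_o(\pi)),\mu}$ is a signed sum over fillings in $\F^\mu_{\mt_e(\pi),\mt_o(\pi)}$, and collecting these over all set partitions $\pi$ of the multiset of content $\la$ records exactly the data of a pair $(\pi,T)\in\oP_{\la,\mu}$.

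Next I would transport this weighted sum across the weight-preserving bijection of the preceding Lemma, rewriting $\sum_{F \in \oP_{\la,\mu}} wt(F)$ first as $\sum_{T \in \oT_{\la,\mu}} wt(T)$ (replacing the primed and unprimed labels of $T$ by the actual sets appearing in $\pi$) and then as $\sum_{S^{(\ast)} \in \oC_{\la,\mu}} wt(S^{(\ast)})$. Finally I would identify the weight on $\oC_{\la,\mu}$ with $(-1)^{|\la|+|S^{(\ast)}|}$ and appeal to the Proposition that evaluates $e_\la[\Xi_\mu]$ as the signed sum $\sum_{S^{(\ast)} \in \oC_{\la,\mu}} (-1)^{|\la|+|S^{(\ast)}|}$, which closes the chain of equalities and yields the claimed formula with no additional computation.

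The one point demanding genuine care—and the step I expect to be the crux—is verifying that the three weight conventions really coincide under the bijection, since the Lemma asserts this but the weights are phrased differently in each setting. Concretely, I would use the correspondence sending a filling $T\in\oT_{\la,\mu}$ to the tuple $(S^{(1)},\ldots,S^{(\ell(\la))})$, where $S^{(i)}$ records the rows whose label-set contains $i$, and observe that swapping the order of summation gives $|S^{(\ast)}| = \sum_{i} |S^{(i)}| = \sum_{\text{rows } r} |(\text{set in row } r)|$. Reducing modulo $2$ shows that $(-1)^{|S^{(\ast)}|}$ equals $(-1)$ raised to the number of rows occupied by a set of odd size, so the $\oC$-weight $(-1)^{|\la|+|S^{(\ast)}|}$ matches the $\oT$-weight (number of cells plus number of odd-size rows) and, through Proposition \ref{prop:evalhet}, the $\oP$-weight (number of primed labels plus the rows they occupy). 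Once this parity bookkeeping is confirmed, the corollary is immediate.
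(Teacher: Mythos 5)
Your proposal is correct and follows exactly the route the paper intends: the corollary is obtained by composing the identity $\sum_{\pi}\HEX_{(\mt_e(\pi)|\mt_o(\pi)),\mu}=\sum_{F\in\oP_{\la,\mu}}wt(F)$ with the weight-preserving bijections among $\oP_{\la,\mu}$, $\oT_{\la,\mu}$ and $\oC_{\la,\mu}$ and the earlier evaluation $e_\la[\Xi_\mu]=\sum_{S^{(\ast)}\in\oC_{\la,\mu}}(-1)^{|\la|+|S^{(\ast)}|}$. Your explicit parity check that $(-1)^{|\la|+|S^{(\ast)}|}$ agrees with the cells-plus-odd-rows weight is precisely the detail the paper leaves implicit in its bijection lemma, and it is carried out correctly.
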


\begin{example}
There are only three set partitions of $\dcl1^3,2^2\dcr$.  These are
\begin{equation*}
\dcl\{1\},\{1\},\{1\},\{2\},\{2\}\dcr,
\dcl\{1\},\{1\},\{1,2\},\{2\}\dcr,
\dcl\{1\},\{1,2\},\{1,2\}\dcr~.
\end{equation*}
Corollary \ref{cor:elaexpr} states that
\begin{equation}\nonumber
e_{32}[\Xi_\mu] = \HEX_{(\cdot|32),\mu} + \HEX_{(1|21),\mu} + \HEX_{(2|1),\mu}
\end{equation}
\end{example}

\begin{remark}
The expressions $\HEX_{(\lambda,\tau),\mu}$ implies
that we could define symmetric functions
$\het_{(\lambda|\tau)}$ with the property
$\het_{(\lambda|\tau)}[\Xi_\mu] =
\HEX_{(\la|\tau),\mu} =
\left< h_{|\mu|-|\la|-|\tau|} h_\lambda e_\tau,
p_\mu \right>$.  Some of the results we
present in this paper can be generalized to elements
$\het_{(\lambda|\tau)}$ which form a spanning set, but not a basis.
This set of element featured heavily in the thesis of Arash
Islami in a project to develop formulae for
a character basis for the hyperoctahedral group
\cite{Islami}.
\end{remark}

\nocite{*}
\bibliographystyle{amsplain-ac}
\bibliography{properties}
\end{document}